\documentclass[12pt]{article}
\usepackage{amssymb,amsfonts,amsmath,amsthm,comment,dsfont,xcolor,tikz,tikz-cd}
\usepackage[colorlinks, citecolor=purple,linkcolor=blue,urlcolor=green]{hyperref}
\usepackage[lite,alphabetic]{amsrefs}
\usepackage[a4paper, portrait, margin=26.5mm]{geometry}

\newcommand{\1}{\mathds{1}}
\newcommand{\0}{\mathds{O}}

\newcommand{\R}{\mathbb{R}}

\newcommand{\N}{\mathbb{N}}

\newcommand{\8}{\infty}

\newcommand{\fvl}{\mathrm{FVL}}
\newcommand{\fbl}{\mathrm{FBL}}
\newcommand{\So}{\mathrm{S}}

\newcommand{\card}{\mathrm{card}}

\newcommand{\vlt}{\mathrm{vlt}}

\newcommand{\Co}{\mathcal{C}}

\newcommand{\Po}{\mathcal{P}}

\newcommand{\Int}{\mathrm{int}}

\newcounter{dummy} \numberwithin{dummy}{section}
\newtheorem{theorem}[dummy]{Theorem}
\newtheorem{lemma}[dummy]{Lemma}

\newtheorem{proposition}[dummy]{Proposition}
\newtheorem{corollary}[dummy]{Corollary}
\newtheorem{question}[dummy]{Question}
\theoremstyle{remark}
\newtheorem{remark}[dummy]{Remark}
\newtheorem{example}[dummy]{Example}

\begin{document}

\title{The Dedekind completion of an Archimedean ordered vector space as a reflector}

\author{Antonio Avil\'{e}s\thanks{Departamento de Matem\'{a}ticas, Universidad de Murcia, Spain (\texttt{avileslo@um.es})}~ and Eugene Bilokopytov\thanks{Instituto de Ciencias Matem\'aticas, Madrid, Spain (\texttt{bilokopy@ualberta.ca}).}}

\maketitle

\begin{abstract}
We consider the category $\mathbf{AOVS}$ of Archimedean ordered vector spaces with linear maps which preserve all existing suprema, and its full subcategories $\mathbf{DAOVS}$, $\mathbf{DVL}$ and $\mathbf{UVL}$, consisting of directed spaces, Dedekind complete vector lattices and universally complete vector lattices, respectively. We deduce from some results in the literature that $\mathbf{DVL}$ and $\mathbf{UVL}$ are reflective subcategories of $\mathbf{DAOVS}$, with the usual Dedekind completion being the reflector in $\mathbf{DVL}$. In contrast to these facts, we show that a non-directed Archimedean ordered vector space of dimension greater than $1$ has no reflector in either $\mathbf{DVL}$ or $\mathbf{UVL}$. In particular, there are no free Dedekind complete vector lattices over a set with more than one element. We also use the occasion to show that a free vector lattice with $\alpha$ generators embeds into a free vector lattice with $\beta$ generators if and only if $\alpha\le\beta$, and explore the concept of the free completion of an Archimedean vector lattice with a strong unit.\medskip

\emph{Keywords:} Ordered vector spaces, vector lattices, Dedekind completeness, universal completeness, reflective subcategories, free vector lattices;

MSC2020 46A40, 46E05.
\end{abstract}

\section{Introduction}

The theory of ordered vector spaces and vector lattices is rich in completion procedures. Among these procedures, Dedekind completion occupies a central place. In the setting of vector lattices, this construction is classical and has been studied from many viewpoints. Nevertheless, when one moves from vector lattices to more general Archimedean ordered vector spaces, and especially when one adopts a categorical perspective, several natural questions become delicate.

A useful way to view completion phenomena is through reflectivity. Given a category of ordered structures together with a distinguished full subcategory of ``complete'' objects, one may ask whether every object admits a universal arrow into that subcategory. This approach to the MacNeille completion of posets was implemented in \cite{bishop} and \cite{erne} for various classes of morphisms. In the present context, the relevant ambient category is formed by Archimedean ordered vector spaces with linear maps preserving all existing suprema, while the principal subcategories are those of Dedekind complete vector lattices, and universally complete vector lattices. Here we extend the scope of \cite{al} and \cite{bmo} which only dealt with Archimedean vector lattices.

The first aim of this paper is to characterize the supremum-continuous operators on directed Archimedean ordered vector spaces in terms of their behavior under completion. More precisely, we show in Theorem \ref{supcont1} that such an operator admits a unique extension between Dedekind completions, thereby proving that the category of Dedekind complete vector lattices is reflective in the category of directed Archimedean ordered vector spaces, with the usual Dedekind completion serving as the reflector. By combining this with a result of Fremlin about the universal completion of a Dedekind complete vector lattice, one also obtains a corresponding reflectivity statement for universally complete vector lattices. We note that Theorem \ref{supcont1} is closely related to some results in \cite{kvgs}.

The second aim is to show that this satisfactory picture breaks down sharply outside the directed setting. One might hope that a suitably generalized notion of completion still produces reflectors for arbitrary Archimedean ordered vector spaces. Theorem \ref{main} proves that this is not the case: apart from the one-dimensional trivial-order example $\mathbb{R}_0$, non-directed Archimedean ordered vector spaces do not admit reflectors in either the Dedekind complete or the universally complete categories. On the way to this result, and as its partial case we also show in Proposition \ref{nofree} that there are no free Dedekind complete vector lattices, nor free universally complete vector lattices, over a set with more than one element. Our methods draw substantially from the Boolean algebra literature; it has also inspired a ``side application'': motivated by a result from \cite{day} we have determined in Proposition \ref{day3} which Archimedean vector lattices with strong units have a completion defined as a free object.

Finally, since a significant part of the paper deals with free constructions, we took the occasion to revisit the notion of a free vector lattice over a set. Here we prove a basic fact (Theorem \ref{mfre}) that a free vector lattice on $\alpha$ generators embeds into a free vector lattice on $\beta$ generators if and only if $\alpha \le \beta$.

\section{Extensions of operators on ordered vector spaces}\label{sectsc}

Let $P$ be a poset. We denote $Q^{\Uparrow}:=\left\{p\in P,~ p\ge Q\right\}$, i.e. the collection of all upper bounds of $Q\subset P$. If $R\subset P$, we further denote $Q^{\Uparrow_{R}}:=Q^{\Uparrow}\cap R$. The meaning of the notations $Q^{\Downarrow}$ and $Q^{\Downarrow_{R}}$ is similar. We say that $Q$ is \emph{supremum-dense} or \emph{$\bigvee$-dense} in $P$ if $p=\bigvee p^{\Downarrow_{Q}}$, for every $p\in P$, and define \emph{infimum-denseness} analogously. We say that $Q$ is \emph{supremum-closed}, or \emph{$\bigvee$-closed} if whenever $R\subset Q$ and $q=\bigvee R$, it follows that $q\in Q$.

If $\varphi:P\to R$ is a map between posets $P$ and $R$, we call it \emph{supremum-continuous} or \emph{$\bigvee$-continuous}, if it preserves all existing suprema in $P$, i.e. if $Q\subset P$ and $q\in P$ are such that $q=\bigvee Q$, then $\varphi\left(q\right)=\bigvee\varphi\left(Q\right)$. If $p\le q$, taking $Q=\left\{p,q\right\}$ yields $\varphi\left(q\right)=\varphi\left(p\right)\vee \varphi\left(q\right)$, in other words $\varphi\left(p\right)\le \varphi\left(q\right)$. It follows that every $\bigvee$-continuous map is order preserving. Every order isomorphism is supremum-continuous. More generally, if $Q\subset P$ is supremum-dense and infimum-dense, then the embedding of $Q$ into $P$ is supremum-continuous (for the case of ordered vector spaces see \cite[Proposition 2.3.27]{kv}). A pre-image of a $\bigvee$-closed set under a $\bigvee$-continuous map is  $\bigvee$-closed.\medskip

Assume that $E$ is an ordered vector space. Then a linear subspace $H\subset E$ is supremum-dense if and only if it is infimum-dense. Recall that $E$ is a \emph{Dedekind complete vector lattice} if it is directed and every order bounded set has a supremum in $E$ (in particular, every set of two elements). It is easy to see that $\bigvee$-closed subspace of a Dedekind complete vector lattice is also a Dedekind complete vector lattice. If $E$ is directed and Archimedean, then it has a \emph{Dedekind completion} $E^{\delta}$, which is the unique Dedekind complete vector lattice which contains $E$ as a supremum-dense subspace (see \cite[Section 2.1]{kv}).

If $E,F$ are ordered vector spaces $T\in L\left(E,F\right)$ is supremum-continuous if and only if it preserves the sets whose infimum is zero.\footnote{In the literature these operators are often called complete Riesz homomorphisms, however such a name would be inconsistent with the nomenclature that we are developing in this paper} Clearly, this property is stable under compositions. Recall that supremum-continuous linear maps between vector lattices are precisely order continuous homomorphisms (see e.g. \cite[Theorem 3.10]{erz}).

\begin{example}\label{trivial}
If $E_{+}=\left\{0_{E}\right\}$, then the only set in $E$ whose infimum is $0_{E}$ is $\left\{0_{E}\right\}$. It follows that any linear operator from $E$ into any ordered vector space is $\bigvee$-continuous.
\qed\end{example}

\begin{example}\label{bigv}
Let $H:=E_{+}-E_{+}$. Let us show that the inclusion $I_{H}:H\to E$ is $\bigvee$-continuous. Let $\varnothing\ne P\subset H$ is such that $\bigwedge_{H} P=0_{E}$. Clearly, $P\subset H_{+}=E_{+}$. Assume that $e\in E$ is such that $e\le P$. Take any $p\in P\subset E_{+}$. Then, $e=p-\left(p-e\right)\in E_{+}-E_{+}=H$, and since $e\le P$ and $\bigwedge_{H} P=0_{E}$, it follows that $e\le 0_{E}$.\medskip

Let us also show that an operator $T:E\to F$ is $\bigvee$-continuous if and only if $\left.T\right|_{H}$ is $\bigvee$-continuous. Necessity follows from $\left.T\right|_{H}=T I_{H}$. For sufficiency assume that $P\subset E$ is such that $\bigwedge P=0_{E}$. Then, $P\subset E_{+}\subset H$ and $\bigwedge_{H}P=0_{E}$. Hence, due to $\bigvee$-continuity of $\left.T\right|_{H}$ we get $\bigwedge TP=0_{F}$.
\qed\end{example}

The results of this section are partially contained in \cite[Section 3]{kvgs} and \cite{al}, but our approach is somewhat different. We start with an auxiliary result. Note that part (ii) is \cite[Proposition 10]{kvgs} (essentially \cite[Lemma 4.3]{imhoff}), and part (i) is essentially \cite[Lemma 3.4]{stennder} but we provide the proofs for the reader's convenience.

\begin{proposition}\label{line}
Let $E,F$ be ordered vector spaces, let $H\subset E$ be a supremum dense linear subspace, and let $T:E\to F$ be a map. Then:
\item[(i)] If $T\in L\left(E,F\right)_{+}$ and such that $\left.T\right|_{H}$ is an order embedding, then $T$ is an order embedding.
\item[(ii)] If $T\in L\left(E,F\right)_{+}$ and such that $\left.T\right|_{H}$ is supremum-continuous, then $T$ is supremum-continuous.
\item[(iii)] If $T$ is supremum-continuous and such that $\left.T\right|_{H}$ is linear, then $T$ is linear.\medskip

Moreover, in (ii) and (iii) $T$ is uniquely determined by $\left.T\right|_{H}$.
\end{proposition}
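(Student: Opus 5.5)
The three parts rest on one observation: an element of $E$ is recovered from $H$ as a supremum and as an infimum. Since $H$ is a supremum-dense subspace, it is also infimum-dense (as noted before the statement), so every $e\in E$ satisfies
\[
e=\bigvee e^{\Downarrow_{H}}=\bigwedge e^{\Uparrow_{H}} .
\]
The plan is to push these two representations through $T$ and compare images inside $F$.

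\emph{Part (i).} Let $T\ge 0$ be linear and suppose $Te\ge 0$; we must show $e\ge 0$. Since $e=\bigvee e^{\Downarrow_{H}}$, it suffices to show that every upper bound $u\in E$ of $-e^{\Uparrow_H}$ is $\le 0$... It is cleaner to argue directly. Take $h\in e^{\Uparrow_{H}}$. Then $Th\ge Te\ge 0$, and $h\in H$, so $h\ge 0$ because $\left.T\right|_{H}$ is an order embedding. Hence $0\le e^{\Uparrow_{H}}$, and therefore $0\le\bigwedge e^{\Uparrow_{H}}=e$. Applying this to differences gives that $T$ is an order embedding.

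\emph{Part (ii).} By the remark before Example \ref{trivial}, it suffices to take $P\subset E$ with $\bigwedge P=0$ and show $\bigwedge TP=0$. Let $f\le TP$. Put $Q:=\bigcup_{p\in P}p^{\Uparrow_{H}}\subset H$. Each $q\in Q$ satisfies $q\ge$ some $p\ge 0$, so $Q\subset H_{+}$. I claim $\bigwedge_{H}Q=0$. Indeed, if $h\in H$ with $h\le Q$, then for each $p$ we have $h\le p^{\Uparrow_H}$, so $h\le\bigwedge p^{\Uparrow_H}=p$. Thus $h\le P$, hence $h\le 0$. So $\bigwedge_H Q=0$, and $\bigvee$-continuity of $\left.T\right|_{H}$ gives $\bigwedge TQ=0$ in $F$. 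Since $T$ is positive, $f\le Tp\le Tq$ for $q\ge p$, so $f\le TQ$ and therefore $f\le 0$. Note that $\bigwedge TQ=0$ requires $0$ to be a lower bound, which holds since $Q\ge 0$ and $T\ge 0$.

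\emph{Part (iii).} Since $T$ is $\bigvee$-continuous, it is order preserving, and $Te=\bigvee T(e^{\Downarrow_H})=\bigwedge T(e^{\Uparrow_H})$, where the second formula uses infimum-preservation. The main obstacle is that $\bigvee$-continuity is only assumed for suprema. Preservation of infima does not follow formally, since this part does not assume linearity. First I would try to avoid infima altogether.

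For additivity, take $e_1,e_2$. Then $(e_1+e_2)^{\Downarrow_H}\supset e_1^{\Downarrow_H}+e_2^{\Downarrow_H}$, and the latter set is also supremum-dense below $e_1+e_2$. Its supremum is $e_1+e_2$, because suprema in an ordered vector space commute with translation: $\bigvee(A+B)=\bigvee A+\bigvee B$ when both exist. So
\[
T(e_1+e_2)=\bigvee T\bigl(e_1^{\Downarrow_H}+e_2^{\Downarrow_H}\bigr)=\bigvee\bigl(Te_1^{\Downarrow_H}+Te_2^{\Downarrow_H}\bigr)=Te_1+Te_2,
\]
using linearity of $T$ on $H$ and the same translation identity in $F$.

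For scalars $\lambda\ge 0$ the same argument applies. For $\lambda=-1$, use $T(e)+T(-e)=T(0)=0$, which follows from additivity together with $T(0)=0\in H$.

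\emph{Uniqueness.} Uniqueness in (ii) and (iii) follows at once from the formula $Te=\bigvee T(e^{\Downarrow_H})$, which holds for any $\bigvee$-continuous $T$.
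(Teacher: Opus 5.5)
Your proof is correct and follows the paper's argument. Parts (i), (ii) and the uniqueness formula $Te=\bigvee T\left(e^{\Downarrow_{H}}\right)$ are the same as in the paper. In (iii) you compress the paper's two-step additivity computation (first $T(e+h)=Te+Th$ for $h\in H$, then general $g$) into one use of $\bigvee(A+B)=\bigvee A+\bigvee B$, and that identity is itself proved by the same two-step translation.

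One point is worth stating explicitly. For uniqueness in (ii), a competing positive operator $S$ with $S|_{H}=T|_{H}$ is itself supremum-continuous by (ii), so the formula also applies to $S$.
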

\begin{proof}
(i): Assume that $e\in E$ is such that $Te\ge 0_{F}$. Then, for every $h\in e^{\Uparrow_{H}}$ we have $Th\ge Te\ge 0_{F}$, and since $\left.T\right|_{H}$ is an order embedding it follows that $h\in H_{+}$. Hence, $e^{\Uparrow_{H}}\subset H_{+}$, and since $H$ is infimum dense we conclude that $e=\bigwedge e^{\Uparrow_{H}}\ge 0_{E}$.\medskip

(ii): Assume that $P\subset E$ is such that $\bigwedge P=0_{E}$. Clearly, $TP\subset F_{+}$. For every $p\in P$ we have $p=\bigwedge p^{\Uparrow_{H}}$, and so $\bigwedge \bigcup\limits_{p\in P}p^{\Uparrow_{H}}=\bigwedge P=0_{E}$. It then follows that $\bigwedge_{H} \bigcup\limits_{p\in P}p^{\Uparrow_{H}}=0_{E}$, and since $\left.T\right|_{H}$ is supremum-continuous, we get $\bigwedge\bigcup\limits_{p\in P}Tp^{\Uparrow_{H}}=0_{F}$. If $f\in F$ is such that $f\le TP$, then for every $p$ we have $f\le Tp\le Tp^{\Uparrow_{H}}$. Hence, $f\le \bigcup\limits_{p\in P}Tp^{\Uparrow_{H}}$, and so $f\le 0_{F}$. We conclude that $\bigwedge TP=0_{F}$.\medskip

(iii): First, for $e,g\in E$ we have $e=\bigvee e^{\Downarrow_{H}}$ and $e+g=g+\bigvee e^{\Downarrow_{H}}=\bigvee\left(g+e^{\Downarrow_{H}}\right)$. Now take $h\in g^{\Downarrow_{H}}$. As $\left.T\right|_{H}$ is linear, we have $T\left(h+e^{\Downarrow_{H}}\right)=Th+Te^{\Downarrow_{H}}$, so that
\begin{align*}
T\left(e+h\right)&=T\left(\bigvee\left(h+e^{\Downarrow_{H}}\right)\right)=\bigvee T\left(h+e^{\Downarrow_{H}}\right)\\
&=\bigvee \left(Th+Te^{\Downarrow_{H}}\right)=Th+\bigvee Te^{\Downarrow_{H}}=Th+ T\bigvee e^{\Downarrow_{H}}=Th+Te.
\end{align*}
Hence,
\begin{align*}
T\left(e+g\right)&=T\left(\bigvee\left(e+g^{\Downarrow_{H}}\right)\right)=\bigvee T\left(e+g^{\Downarrow_{H}}\right)=\bigvee\limits_{h\in g^{\Downarrow_{H}}}T\left(e+h\right)\\
&=\bigvee\limits_{h\in g^{\Downarrow_{H}}}\left(Te+Th\right)=Te+\bigvee Tg^{\Downarrow_{H}}=Te+ T\bigvee g^{\Downarrow_{H}}=Te+Tg.
\end{align*}

Positive homogeneity is proven similarly. An additive positively homogenous map between vector spaces is always linear.\medskip

Now assume that $S:E\to F$ is a map such that $\left.S\right|_{H}=\left.T\right|_{H}$. If $S$ is supremum-continuous, then $Te=T\bigvee e^{\Downarrow_{H}}=\bigvee Te^{\Downarrow_{H}}=\bigvee Se^{\Downarrow_{H}}=S\bigvee e^{\Downarrow_{H}}=Se$, for every $e\in E$, hence $S=T$. If $S$ is positive, then by (ii) it is supremum-continuous, and so we are back to the previous case.
\end{proof}

\begin{theorem}\label{supcont1}
Let $E$ and $F$ be directed Archimedean ordered vector spaces. For an operator $T:E\to F$ the following conditions are equivalent:
\item[(i)] $T$ is supremum-continuous;
\item[(ii)] $T$ is positive and $T\left(A^{\Uparrow}\right)^{\Downarrow}\subset \left(TA\right)^{\Uparrow\Downarrow}$, for every $A\subset E$;
\item[(iii)] $T\left(A^{\Uparrow\Downarrow}\right)\subset \left(TA\right)^{\Uparrow\Downarrow}$, for every $A\subset E$;
\item[(iv)] There is a supremum-continuous operator $T^{\delta}:E^{\delta}\to F^{\delta}$ which extends $T$;
\item[(v)] There are Archimedean vector lattices $\widetilde{E},\widetilde{F}$ and a supremum-continuous operator $\widetilde{T}:\widetilde{E}\to \widetilde{F}$ such that $E$ embeds as a subspace of $\widetilde{E}$ in a supremum-continuous way, $F$ embeds as a subspace of $\widetilde{F}$, and $\widetilde{T}$ extends $T$.\medskip

Moreover, the extension in (iv) is unique even among positive operators. It is injective if and only if $T$ is an order embedding.
\end{theorem}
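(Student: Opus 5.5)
I will prove the cycle (i)$\Rightarrow$(ii)$\Rightarrow$(iii)$\Rightarrow$(iv)$\Rightarrow$(v)$\Rightarrow$(i). The uniqueness and injectivity claims then follow from Proposition \ref{line}.

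\textbf{(i)$\Rightarrow$(ii).} Positivity is automatic. Take $A\subset E$ and $e\in (A^{\Uparrow})^{\Downarrow}$, i.e.\ $e\le A^{\Uparrow}$; we must show $Te\le (TA)^{\Uparrow}$. I would reduce this to a statement about infima. Since $E$ is Archimedean and directed, $E$ sits supremum-densely in $E^{\delta}$. There $\bigwedge A^{\Uparrow}=\bigvee_{E^\delta}A$ exists when $A$ is bounded above. Hence the set $D:=\{u-e:\ u\in A^{\Uparrow}\}$ has infimum in $E$ equal to $s-e\ge 0$, where $s=\bigvee_{E^\delta} A$; the case $A^{\Uparrow}=\varnothing$ is trivial. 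This is not directly a zero infimum in $E$, so I would argue with the set $\{u-a\}$ instead. Using Archimedeanity of $E^\delta$, $\bigwedge\{u-a: u\in A^{\Uparrow}\}$ taken over suitable elements is controlled by differences. I expect the cleanest route to be to show first that (i) implies (iv), using Proposition \ref{line}(ii) on $F^\delta$, and then deduce (ii) and (iii) from (iv). So I would reorder as (i)$\Rightarrow$(iv)$\Rightarrow$(iii)$\Rightarrow$(ii)$\Rightarrow$(i), with (iv)$\Leftrightarrow$(v) handled separately.

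\textbf{(i)$\Rightarrow$(iv).} This is the main obstacle. For $x\in E^{\delta}$ define
\[
T^{\delta}x:=\bigvee_{F^\delta} T\bigl(x^{\Downarrow_E}\bigr).
\]
The set $T(x^{\Downarrow_E})$ is bounded above by any $Tu$ with $u\in x^{\Uparrow_E}$; this set is nonempty since $E$ is directed and majorizing in $E^\delta$. The key step is to show $T^\delta$ is additive and positively homogeneous. Once that holds, Proposition \ref{line}(ii) applied with $H=E$ (since $T^\delta|_E=T$ is supremum-continuous into $F^\delta$, by composing with the supremum-continuous dense embedding $F\to F^\delta$) gives supremum-continuity.

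For additivity, $\ge$ is easy. For $\le$, I need
\[
\bigvee T(x^{\Downarrow_E})=\bigwedge T(x^{\Uparrow_E}).
\]
This is exactly where (i) enters. The set $\{u-v: u\in x^{\Uparrow_E}, v\in x^{\Downarrow_E}\}$ has infimum $0$ in $E$, because it already has infimum $0$ in $E^\delta$ by Archimedeanity and denseness. Supremum-continuity of $T$ then gives $\bigwedge\{Tu-Tv\}=0$ in $F$, hence in $F^\delta$. With the sup and inf descriptions agreeing, additivity follows from the sup description for one summand and the inf description for the other.

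\textbf{(iv)$\Rightarrow$(iii)$\Rightarrow$(ii)$\Rightarrow$(i) and (iv)$\Leftrightarrow$(v).} For (iv)$\Rightarrow$(iii): an element of $A^{\Uparrow\Downarrow}$ lies below $\bigwedge_{E^\delta}A^{\Uparrow_E}=\bigvee_{E^\delta}A$. Applying $T^\delta$ and using supremum-continuity, its image lies below $\bigvee_{F^\delta}TA$, hence below $(TA)^{\Uparrow}$. For (iii)$\Rightarrow$(ii): positivity follows from $A=\{0\}$, and the inclusion is the same statement. For (ii)$\Rightarrow$(i): if $\bigwedge P=0$, apply (ii) to $A=-P$. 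If $f\le TP$, then $-f\in (TA)^{\Uparrow}$, and $0\in (A^{\Uparrow})^{\Downarrow}$, so $0\in T(A^{\Uparrow})^{\Downarrow}\subset(TA)^{\Uparrow\Downarrow}$, giving $0\le -f$. (The orientation of (ii) has to be read carefully here.) (iv)$\Rightarrow$(v) is immediate. For (v)$\Rightarrow$(i), the composition $E\to\widetilde E\to\widetilde F$ is supremum-continuous. To pull back to $F$, observe that $\bigwedge_F TP\le$ any lower bound pattern: a lower bound in $F$ of $TP$ is a lower bound in $\widetilde F$, hence $\le 0$.

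\textbf{Uniqueness and injectivity.} Uniqueness among positive operators is the ``moreover'' of Proposition \ref{line} with $H=E$. For injectivity: if $T$ is an order embedding, then $T^\delta$ is an order embedding by Proposition \ref{line}(i), hence injective. Conversely, if $T^\delta$ is injective, it is an injective order continuous lattice homomorphism between vector lattices, hence an order embedding, and so is its restriction $T$.
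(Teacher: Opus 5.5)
\textbf{Gap: condition (ii) is misread.} In the paper, $T\left(A^{\Uparrow}\right)^{\Downarrow}$ denotes $\bigl(T(A^{\Uparrow})\bigr)^{\Downarrow}$, the set of all lower bounds in $F$ of $T(A^{\Uparrow})$. This is how the paper uses it in its step (ii)$\Rightarrow$(iii), which is precisely the observation $T(A^{\Uparrow\Downarrow})\subset T(A^{\Uparrow})^{\Downarrow}$. You read it as $T\bigl((A^{\Uparrow})^{\Downarrow}\bigr)$: your first paragraph sets out to show $Te\le (TA)^{\Uparrow}$ for $e\in(A^{\Uparrow})^{\Downarrow}$, which is (iii). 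So your step ``(iii)$\Rightarrow$(ii): the inclusion is the same statement'' is not valid. Condition (ii) requires every lower bound of $T(A^{\Uparrow})$, not only the images of lower bounds of $A^{\Uparrow}$, to lie below $(TA)^{\Uparrow}$. The set $T(A^{\Uparrow})^{\Downarrow}$ is in general strictly larger than $T(A^{\Uparrow\Downarrow})$: for $T=0$ it is $-F_{+}$, whereas $T(A^{\Uparrow\Downarrow})=\{0_{F}\}$. Consequently no arrow of your scheme actually lands in (ii).

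The missing implication is the paper's (i)$\Rightarrow$(ii), and you already have its key ingredient. For nonempty $A$ bounded above, let $s:=\bigvee_{E^{\delta}}A$. Then $A^{\Uparrow}=s^{\Uparrow_{E}}$, so the same computation as in your (i)$\Rightarrow$(iv) shows that $A^{\Uparrow}-A$ has infimum $s-s=0$ in $E^{\delta}$, hence in $E$. Supremum-continuity then gives $\bigwedge_{F}\bigl(T(A^{\Uparrow})-TA\bigr)=0_{F}$. If $f\le T(A^{\Uparrow})$ and $g\ge TA$, then $f-g\le T(A^{\Uparrow})-TA$, so $f\le g$. Alternatively, prove (iv)$\Rightarrow$(ii): $T^{\delta}$ is also infimum-continuous, so every lower bound of $T(A^{\Uparrow})$ lies below $T^{\delta}\bigl(\bigwedge_{E^{\delta}}A^{\Uparrow}\bigr)=T^{\delta}s=\bigvee_{F^{\delta}}TA$.

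\textbf{The remaining steps are sound.} Your (ii)$\Rightarrow$(i) argument with $A=-P$ survives the correct reading, since there $A^{\Uparrow}=E_{+}$ and $0\le T(E_{+})$ by positivity. In the form you wrote it, it is also a valid direct proof of (iii)$\Rightarrow$(i). So once the arrow above is added, the equivalences close via (i)$\Rightarrow$(iv)$\Rightarrow$(iii)$\Rightarrow$(i), (i)$\Leftrightarrow$(ii), and (iv)$\Rightarrow$(v)$\Rightarrow$(i).

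Your (i)$\Rightarrow$(iv) is a genuinely different route from the paper's. You construct $T^{\delta}x:=\bigvee T(x^{\Downarrow_{E}})$ directly and identify it with $\bigwedge T(x^{\Uparrow_{E}})$ by applying supremum-continuity to $x^{\Uparrow_{E}}-x^{\Downarrow_{E}}$. You then read off additivity from the two descriptions and finish with Proposition \ref{line}(ii). The paper instead derives the extension from the cut condition (iii): it restricts the MacNeille map $T^{\partial}$ (using Bishop's criterion) to $E^{\delta}$ and gets linearity from Proposition \ref{line}(iii). Your route is more elementary and self-contained. The paper's shows that the purely order-theoretic condition (iii) already produces the extension. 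Uniqueness and the injectivity criterion are handled as in the paper.

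One caveat shared with the paper: (ii) and (iii) must be read for nonempty $A$ bounded above, and your (iv)$\Rightarrow$(iii) tacitly assumes this. Otherwise they can fail for supremum-continuous maps. For example, take $T:\R^{2}\to\R$, $T(a,b)=a$, and $A=\{(0,n):n\in\N\}$. Then $T(A^{\Uparrow\Downarrow})=T(\R^{2})=\R$, but $(TA)^{\Uparrow\Downarrow}=(-\infty,0]$.
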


\begin{remark}
Note that $T$ is positive if and only if $T\left(A^{\Uparrow}\right)^{\Downarrow}\supset \left(TA\right)^{\Uparrow\Downarrow}$, for every $A\subset E$, and if and only if $T\left(A^{\Uparrow}\right)^{\Downarrow}\supset \left(TA\right)^{\Uparrow\Downarrow}$, for $A=\left\{0_{E}\right\}$. Hence, the condition (ii) in Theorem \ref{supcont1} can be restated as $T\left(A^{\Uparrow}\right)^{\Downarrow}= \left(TA\right)^{\Uparrow\Downarrow}$, for every $A\subset E$.

The conditions (ii) and (iii) in Theorem \ref{supcont1} strongly resemble some classes of operators considered in \cite[Section 2.3]{kv}, with the distinction that there $A$ runs only over finite nonempty subsets of $E$.
\qed\end{remark}

We recall that the MacNeille completion $P^{\partial}$ is defined for any poset\footnote{We distinguish the concepts of Dedekind and MacNeille completions, and use different notations for them.}, and consists of all \emph{lower cuts} in $P$, i.e.  subsets $A$ of $P$ such that $A=A^{\Uparrow\Downarrow}$. We always view $P$ embedded into $P^{\partial}$ via $p\mapsto p^{\Downarrow}$. If $Q$ is also a poset and $\varphi:P\to Q$, define $\varphi^{\partial}:P^{\partial}\to Q^{\partial}$ by $\varphi^{\partial}\left(A\right):=\varphi\left(A\right)^{\Uparrow\Downarrow}$. It is easy to see that if $\varphi$ is order preserving, then $\varphi^{\partial}$ extends $\varphi$. Furthermore, according to \cite{bishop}, $\varphi^{\partial}$ is supremum-continuous if and only if the pre-image of a lower cut under $\varphi$ is a lower cut. It is not hard to show (see e.g. \cite[Lemma 2.1(1)]{erne}) that this condition is equivalent to $T\left(A^{\Uparrow\Downarrow}\right)\subset \left(TA\right)^{\Uparrow\Downarrow}$, for every $A\subset P$.

If $E$ is a non-zero directed Archimedean OVS, then $E^{\delta}=E^{\partial}\backslash\left\{\varnothing, E\right\}$. The argument in (i)$\Rightarrow$(ii) is inspired by \cite[Proposition 2.3.16]{kv}.

\begin{proof}[Proof of Theorem \ref{supcont1}]
(i)$\Rightarrow$(ii): It is clear that $T$ is positive. For $B:=A^{\Uparrow}-A$ we have $\bigwedge B=0_{E}$ (the proof of \cite[Theorem 22.5]{lz} goes through), hence $\bigwedge T\left(B\right)=0_{F}$. Let $f\in T\left(A^{\Uparrow}\right)^{\Downarrow}$. For every $g\in \left(TA\right)^{\Uparrow}$, $e\in A^{\Uparrow}$ and $h\in A$ we have that $f\le Te$ and $g\ge Th$, therefore $f-g\le T\left(e-h\right)$. Since $e,h$ were arbitrary, it follows that $f-g\le T\left(B\right)$, hence $f-g\le 0_{F}$, and so $f\le g$. As $g$ was arbitrary, we conclude that $f\in \left(TA\right)^{\Uparrow\Downarrow}$.\medskip

(ii)$\Rightarrow$(iii): It is enough to show that $T\left(A^{\Uparrow\Downarrow}\right)\subset T\left(A^{\Uparrow}\right)^{\Downarrow}$, which is equivalent to $T\left(A^{\Uparrow\Downarrow}\right)\le T\left(A^{\Uparrow}\right)$. Indeed, if $e\in A^{\Uparrow\Downarrow}$ and $g\in A^{\Uparrow}$ then $e\le g$, hence $Te\le Tg$.\medskip

(iii)$\Rightarrow$(iv): First, the cases when $E=\left\{0_{E}\right\}$ or $F=\left\{0_{F}\right\}$ are trivial, and so we will assume that $\dim E,\dim F>0$. By the discussion above  $T^{\partial}:E^{\partial}\to F^{\partial}$ extends $T$ and is supremum-continuous. Restricting $T^{\partial}$ to $E^{\delta}=E^{\partial}\backslash\left\{\varnothing, E\right\}$ yields a supremum-continuous map $T^{\delta}:E^{\delta}\to F^{\partial}$ which extends $T$. Since $E$ is majorizing in $E^{\delta}$ it follows that for every $g\in E^{\delta}$ there are $e,h\in E$ such that $e\le g\le h$, hence $Te=T^{\delta}e\le T^{\delta}g\le T^{\delta}h=Th$, which means that $T^{\delta}g$ is neither the least, nor the greatest element of $F^{\partial}$, thus $T^{\delta}g\in F^{\delta}$. Therefore, $T^{\delta}$ is a map into $F^{\delta}$. Its linearity follows from part (iii) of Proposition \ref{line}.\medskip


(iv)$\Rightarrow$(v): The fact that $E$ embeds into $E^{\delta}$ is a supremum-continuous way follows from \cite[Section 2.1 and Proposition 2.3.27]{kv}. The rest is clear.\medskip

(v)$\Rightarrow$(i): Let $J:E\to \widetilde{E}$ be the inclusion, which is assumed supremum-continuous. It follows that $T=J\widetilde{T}$ is supremum-continuous as a map into $\widetilde{F}$. Let $P\subset E$ be such that $\bigwedge P=0_{E}$, so that $TP\subset F$ with $\bigwedge_{\widetilde{F}} TP=0_{\widetilde{F}}$, which implies $\bigwedge_{F} TP=0_{F}$. Thus, $T$ is supremum-continuous, as an operator into $F$.\medskip


Uniqueness of the extension follows from Proposition \ref{line}. If $T^{\delta}$ is injective, then it is an order embedding due to the fact that every injective semi-lattice homomorphism is such. It then follows that $T=\left.T^{\delta}\right|_{E}$ is an order embedding. Conversely, if $T$ is an order embedding, then by part (i) of Proposition \ref{line} the same is true for $T^{\delta}$, and in particular, the latter is injective.
\end{proof}

\begin{remark}
An alternative route of proving Theorem \ref{supcont1} is to use the result (see \cite[Theorem 3.1]{erne}) that for a map $\varphi:P\to Q$ between posets $P,Q$ the fact that $\varphi^{\partial}$ is supremum-continuous and infimum-continuous is equivalent to $\varphi\left(A^{\Uparrow}\right)^{\Downarrow}= \varphi\left(A\right)^{\Uparrow\Downarrow}$ and $\varphi\left(A^{\Downarrow}\right)^{\Uparrow}= \varphi\left(A\right)^{\Downarrow\Uparrow}$, for every $A\subset P$. However, this result is significantly more difficult than that from \cite{bishop}. Another alternative proof is to combine \cite[Corollary 11]{kvgs} with Veksler's theorem.

Also, instead of using part (i) of Proposition \ref{line} we could rely on the fact (\cite[Theorem 20]{ac}; note that it is not used in the proof that the poset is a lattice) that if $\varphi$ is an order embedding, then so is $\varphi^{\partial}$.\medskip
\qed\end{remark}

\section{Free Dedekind and universally complete vector lattices}\label{fre}

Recall that if $K$ is a compact Hausdorff space, then $\Co\left(K\right)$ is Dedekind complete if and only if $K$ is \emph{extremally disconnected}, i.e. $\overline{U}$ is open, for any open $U\subset K$ (see \cite[Theorem 1.50]{ab0}. If $K$ is extremally disconnected, let $\Co^{\8}\left(K\right)$ be the set of all continuous functions from $K$ to $\left[-\8,\8\right]$ such that $f^{-1}\left(\pm\8\right)$ is nowhere dense in $K$. If $f,g\in \Co^{\8}\left(K\right)$ there is a unique $h\in \Co^{\8}\left(K\right)$ such that $h\left(x\right)=f\left(x\right)+g\left(x\right)$, for every $x\notin f^{-1}\left(\pm\8\right)\cup g^{-1}\left(\pm\8\right)$; we declare this $h$ the sum of $f$ and $g$ in $\Co^{\8}\left(K\right)$. With this addition and its natural order and scalar multiplication $\Co^{\8}\left(K\right)$ is a universally complete vector lattice (see \cite[Theorem 7.27]{ab0}; recall that a Dedekind complete vector lattice $E$ is called \emph{universally complete} if every set of disjoint elements in $E$ is order bounded). If $E$ is an Archimedean vector lattice then there is an essentially unique universally complete vector lattice $E^{u}$ which contains $E$ as an order dense sublattice. Note that $E^{u}$ is isomorphic to $\Co^{\8}\left(K\right)$, where $K$ is the Stone space of the complete Boolean algebra of bands of $E$ (see \cite[Theorem 7.29]{ab0}).\medskip

Let $\mathbf{AOVS}$, $\mathbf{DAOVS}$, $\mathbf{DVL}$, and $\mathbf{UVL}$ stand for the categories whose objects are Archimedean OVS's, directed Archimedean OVS's, Dedekind complete vector lattices, and universally complete vector lattices, respectively, and the morphisms are supremum-continuous operators.

If $\mathcal{C}$ is a category, we denote the class of its objects by $\left|\Co\right|$, and if $C,D\in \left|\Co\right|$ then $\Co\left(C,D\right)$ stands for the set of morphisms from $C$ to $D$.

For a category $\mathcal{C}$ whose objects are ``structured'' sets, and morphisms are maps between these sets a \emph{free object} over a set $A$ is a pair $\left(C_{A},\iota\right)$, where $C_{A}\in\left|\mathcal{C}\right|$, $\iota:A\to C_{A}$ (here by $C_{A}$ we mean the underlying set) such that for every $D\in \left|\mathcal{C}\right|$ and any $\varphi:A\to D$ there is a unique $\widehat{\varphi}:C_{A}\to D$ such that $\widehat{\varphi}\circ\iota=\varphi$.

\begin{example}\label{cfree}
Let us discuss the notion of a free Dedekind complete vector lattice, i.e. a free object of $\mathbf{DVL}$ over a set $A$. First, it is easy to check that $\left\{0\right\}$ is the free Dedekind complete vector lattice over $\varnothing$. Let us now also show that $\left(\R^{2},\iota\right)$ is the free Dedekind complete vector lattice over a singleton $\left\{p\right\}$, where $\iota\left(p\right)=\left(1,-1\right)$. Let $F$ be a Dedekind complete vector lattice, and let $\varphi:\left\{p\right\}\to F$, so that $f:=\varphi\left(p\right)$. Let $T:\R^{2}\to F$ be defined by $T\left(r,s\right):=rf^{+}+sf^{-}$. Clearly, $T$ is linear and $T\circ\iota=\varphi$. We also have $T\left(\left|r\right|,\left|s\right|\right)=\left|r\right|f^{+}+\left|s\right|f^{-}=\left|rf^{+}+sf^{-}\right|=\left|T\left(r,s\right)\right|$, where the middle equality follows from the fact that $f^{+},f^{-}$ are positive and disjoint. Hence, $T$ is a homomorphism, and it is in fact order continuous (use the condition (iv) in \cite[Theorem 3.10]{erz} and the fact that any ideal in $\R^{2}$ is a band). Note that the same arguments show that $\left\{0\right\}$ and $\left(\R^{2},\iota\right)$ are the free universally complete vector lattices over $\varnothing$ and $\left\{p\right\}$, respectively.
\qed\end{example}

We will now show that Example \ref{cfree} describes the only free Dedekind complete vector lattices, using methods developed in Boolean algebra theory (see also \cite{hamana} for a similar application). We start with some auxiliary results.

\begin{proposition}\label{closed}
Let $E$ be a Dedekind complete vector lattice, let $F$ be a vector lattice, and let $T:E\to F$ be $\bigvee$-continuous such that $TE$ is majorizing in $F$. Then, $TE$ is a $\bigvee$-closed sublattice in $F$.
\end{proposition}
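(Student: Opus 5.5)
The plan is to pull a set $R\subset TE$ that has a supremum in $F$ back to an order bounded set in $E$. We then use Dedekind completeness of $E$ to form its supremum there, and supremum-continuity of $T$ to push it forward again.

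First, since $T$ is a $\bigvee$-continuous linear map between vector lattices, it is an order continuous lattice homomorphism (\cite[Theorem 3.10]{erz}). Hence $TE$ is a sublattice of $F$, and it only remains to check $\bigvee$-closedness. We may assume $F\ne\{0_F\}$.

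\textbf{Reducing to an order embedding.} Let $N:=\ker T$. It is an ideal because $T$ is a homomorphism, and it is order closed because $T$ is order continuous, so $N$ is a band in $E$. As $E$ is Dedekind complete, $N$ is a projection band: $E=N\oplus B$ with $B:=N^{d}$, and we write $P$ for the band projection onto $B$. Then $T=TP$, so $TE=TB$. Moreover $\left.T\right|_{B}$ is an injective lattice homomorphism, hence an order embedding. Indeed, if $x\in B$ and $Tx\ge 0_F$, then $T(x^{-})=(Tx)^{-}=0_F$. Since $x^{-}\in B$ and $\left.T\right|_{B}$ is injective, this gives $x^{-}=0_E$, that is, $x\ge 0_E$.

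\textbf{The supremum argument.} Let $R\subset TE$ and $f\in F$ with $f=\bigvee R$. If $R=\varnothing$, then $f$ would be a least element of $F$, which is impossible in a non-zero vector lattice. So assume $R\ne\varnothing$ and choose $Q\subset B$ with $TQ=R$. Since $TE$ is majorizing, there is $h\in E$ with $f\le Th=T(Ph)$. Then $TQ\le T(Ph)$ with $Q\cup\{Ph\}\subset B$, and because $\left.T\right|_{B}$ is an order embedding we get $Q\le Ph$. So $Q$ is a non-empty order bounded subset of $E$, and by Dedekind completeness $q:=\bigvee Q$ exists in $E$. By $\bigvee$-continuity of $T$,
\[
Tq=\bigvee TQ=\bigvee R=f,
\]
so $f\in TE$, as required.

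The main obstacle is the middle step. Bounding $TQ$ above inside $TE$ does not by itself bound $Q$ above in $E$, because $T$ may have a kernel. This is why we first pass to the complementary band $B$, using that the kernel of an order continuous homomorphism is a band and that in a Dedekind complete space every band is a projection band. The remaining steps are routine.
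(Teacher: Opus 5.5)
Your proof is correct, but it handles the key step differently from the paper. That step is pulling an upper bound of $R$ in $TE$ back to an upper bound of the chosen preimages in $E$. You split $E=\ker T\oplus(\ker T)^{d}$: order continuity makes $\ker T$ a band, and the Riesz projection theorem makes it a projection band. You then use that $T$ restricted to the complementary band is an injective lattice homomorphism, hence an order embedding, so $TQ\le T(Ph)$ forces $Q\le Ph$.

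The paper avoids the kernel with a one-line lattice trick. Having picked $e$ with $Te\ge f$ and arbitrary preimages $e_p$ of the $p\in P$, it replaces $e_p$ by $e\wedge e_p$. This is still a preimage, because $T(e\wedge e_p)=Te\wedge p=p$, and it now lies below $e$. Dedekind completeness and $\bigvee$-continuity then finish exactly as in your last step. So the obstacle you single out is real, but it does not need band projections: the meet trick uses only that $T$ is a lattice homomorphism, and it uses Dedekind completeness and order continuity once each instead of twice.

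Your route gives a structural by-product in exchange: $T$ factors as a band projection followed by an order embedding, so $TE$ is lattice isomorphic to the band $(\ker T)^{d}$. You also treat the degenerate case $R=\varnothing$ explicitly, which the paper's proof passes over.

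One small wording point, which the paper's proof shares: $Q$ is only bounded above, not order bounded. This is harmless, since in a Dedekind complete vector lattice a nonempty set bounded above already has a supremum.
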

\begin{proof}
Assume that $f=\bigvee P$, where $P\subset TE$. Since $TE$ is majorizing, there is $e\in E$ such that $Te\ge f\ge P$. For every $p\in P$ there is $e_{p}\in E$ such that $Te_{p}=p$. As $T$ is a homomorphism, we have $T\left(e\wedge e_{p}\right)=Te\wedge p=p$, and so replacing $e_{p}$ with $e\wedge e_{p}$ if needed, we may assume that $e_{p}\le e$. Since $E$ is Dedekind complete, there is $g:=\bigvee\limits_{p\in P}e_{p}$, and then due to $\bigvee$-continuity of $T$ we get $Tg=\bigvee\limits_{p\in P}Te_{p}=\bigvee\limits_{p\in P}p=f$. Thus, $f\in TE$.
\end{proof}

\begin{lemma}\label{closedu}
Let $E$ be a Dedekind complete vector lattice, let $F$ be a vector lattice, and let $T:E\to F$ be $\bigvee$-continuous. For every $f\in TE$ the sublattice $TE\cap I_{f}$ is a $\bigvee$-closed in $I_{f}$.
\end{lemma}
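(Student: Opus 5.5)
We reduce to Proposition \ref{closed} by restricting $T$ to a suitable ideal of $E$. Fix $f\in TE$ and choose $e\in E$ with $Te=f$. Since $T$ is a homomorphism (a $\bigvee$-continuous operator between vector lattices), we may replace $e$ by $\left|e\right|$ and $f$ by $\left|f\right|$: we have $T\left|e\right|=\left|f\right|$, and $I_{f}=I_{\left|f\right|}$. So assume $e,f\ge 0$.

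Let $I_{e}\subset E$ be the principal ideal generated by $e$. Any ideal of a Dedekind complete vector lattice is Dedekind complete, because an order bounded subset of $I_{e}$ has its supremum in $E$, and that supremum lies between elements of $I_{e}$. Let $S:=\left.T\right|_{I_{e}}$. Clearly $S$ maps $I_{e}$ into $I_{f}$, since $\left|g\right|\le \lambda e$ gives $\left|Tg\right|=T\left|g\right|\le\lambda f$. $S$ is $\bigvee$-continuous as a map into $I_{f}$: if $P\subset I_{e}$ has $\bigwedge_{I_{e}}P=0$, then $\bigwedge_{E}P=0$ as well, because any lower bound $h\in E$ of $P$ may be replaced by $h\vee(-e)$, which lies in $I_{e}$ when $P\ne\varnothing$ (the empty case is vacuous). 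Hence $\bigwedge_{F}TP=0$, and therefore $\bigwedge_{I_{f}}TP=0$. Moreover $SI_{e}$ is majorizing in $I_{f}$, since every element of $I_{f}$ is dominated by some $\lambda f=S(\lambda e)$. Proposition \ref{closed} then shows that $SI_{e}$ is a $\bigvee$-closed sublattice of $I_{f}$.

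It remains to identify $SI_{e}$ with $TE\cap I_{f}$. The inclusion $SI_{e}\subset TE\cap I_{f}$ is immediate. For the reverse, take $Tg\in I_{f}$ with $\left|Tg\right|\le\lambda f$. Put $g':=(g\wedge\lambda e)\vee(-\lambda e)\in I_{e}$. Since $T$ is a homomorphism, $Tg'=(Tg\wedge\lambda f)\vee(-\lambda f)=Tg$. Thus $TE\cap I_{f}=SI_{e}$, which is $\bigvee$-closed in $I_{f}$.

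The main point needing care is the $\bigvee$-continuity of the restriction $S$ into $I_{f}$, that is, passing infima between $I_{e}$, $E$, $F$ and $I_{f}$; the truncation argument above handles it.
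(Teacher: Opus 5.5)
Your proof is correct and follows the same strategy as the paper: restrict $T$ to an ideal of $E$, check that the restriction is $\bigvee$-continuous into $I_{f}$ and has majorizing range, and then invoke Proposition \ref{closed}. The paper instead restricts to the larger ideal $H=T^{-1}I_{f}$, which makes $TH=TE\cap I_{f}$ immediate, whereas your choice of $I_{e}$ needs the truncation step $g'=(g\wedge\lambda e)\vee(-\lambda e)$, which you carry out correctly; you also verify explicitly the $\bigvee$-continuity of the restriction, which the paper only calls ``easy to see''.
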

\begin{proof}
Let $H:=T^{-1}I_{f}$, which is an ideal in $F$, and thus a Dedekind complete vector lattice. It is easy to see that $\left.T\right|_{H}$ is $\bigvee$-continuous into $I_{f}$, and since $f\in TH$, Proposition \ref{closed} guarantees that $TH=TE\cap I_{f}$ is a $\bigvee$-closed sublattice of $I_{f}$.
\end{proof}

The following result is analogous to \cite[Theorems 13.1]{koppel} (see also \cite{solovay}).

\begin{proposition}\label{soloway}
For any cardinal $\kappa$ there is an extremally disconnected compact Hausdorff space $L$ and $u\in\Co\left(L\right)_{+}$ such that $\card \Co\left(L\right)\ge\kappa$ and there is no proper $\bigvee$-closed sublattice of $\Co\left(L\right)$ containing $\1_{L},u$.
\end{proposition}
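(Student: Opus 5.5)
The plan is to transfer the Boolean-algebra fact behind \cite[Theorem 13.1]{koppel} (Gaifman--Hales--Solovay) to the vector-lattice setting. That fact says: for every cardinal $\kappa$ there is a complete Boolean algebra $B$ with $\card B\ge\kappa$ which is completely generated by a countable set $\left\{b_{n}\right\}_{n\in\N}$. In other words, the only complete subalgebra of $B$ containing all $b_{n}$ is $B$ itself. Let $L$ be the Stone space of $B$. Since $B$ is complete, $L$ is extremally disconnected, and $B$ is identified with the algebra of clopen subsets of $L$. As the $\1_{b}$, $b\in B$, are distinct elements of $\Co\left(L\right)$, we get $\card\Co\left(L\right)\ge\card B\ge\kappa$.

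\emph{The element $u$.} I would put $u:=\sum_{n}3^{-n}\1_{b_{n}}$. The series converges uniformly, so $u\in\Co\left(L\right)_{+}$. Let $G$ be a $\bigvee$-closed (vector) sublattice containing $\1_{L}$ and $u$. The first step is to recover each $\1_{b_{n}}$ from $\1_{L}$ and $u$ by lattice-linear operations. On $b_{1}$ we have $u\ge 1/3$, while off $b_{1}$ we have $u\le\sum_{n\ge2}3^{-n}=1/6$. Hence
\[
\1_{b_{1}}=\left(6u-\1_{L}\right)^{+}\wedge\1_{L}\in G.
\]
Then $u_{2}:=3\left(u-3^{-1}\1_{b_{1}}\right)=\sum_{n\ge2}3^{-n+1}\1_{b_{n}}\in G$ has the same form as $u$, and induction gives $\1_{b_{n}}\in G$ for all $n$.

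\emph{Closure properties of $C$.} Next consider $C:=\left\{b\in B:\ \1_{b}\in G\right\}$. It contains every $b_{n}$. It is closed under complements via $\1_{L}-\1_{b}$, and under finite meets via $\1_{b}\wedge\1_{c}=\1_{b\wedge c}$. For arbitrary joins: if $\left\{c_{i}\right\}\subset C$, then in $\Co\left(L\right)$ we have $\bigvee\1_{c_{i}}=\1_{c}$, where $c$ is the closure of $\bigcup c_{i}$. This closure is clopen by extremal disconnectedness and equals the Boolean join $\bigvee c_{i}$. Since $G$ is $\bigvee$-closed, $\1_{c}\in G$. Thus $C$ is a complete subalgebra containing the generators, so $C=B$.

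\emph{Conclusion.} Now $G$ contains the linear span $S$ of the $\1_{b}$, $b\in B$, i.e.\ all clopen step functions. Every $f\in\Co\left(L\right)$ satisfies $f=\bigvee f^{\Downarrow_{S}}$. Indeed, $L$ is zero-dimensional, so for each $\varepsilon>0$ there is $s\in S$ with $f-\varepsilon\1_{L}\le s\le f$, and therefore any upper bound of $f^{\Downarrow_{S}}$ dominates $f-\varepsilon\1_{L}$ for every $\varepsilon$. As $G$ is $\bigvee$-closed, $f\in G$, so $G=\Co\left(L\right)$.

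The main obstacle is the set-theoretic input, namely the existence of arbitrarily large complete Boolean algebras that are countably completely generated. I would simply quote it from \cite{koppel} or \cite{solovay} rather than reprove it. The remaining steps are routine, and the only care needed is in checking that suprema in $\Co\left(L\right)$ of characteristic functions are again characteristic functions of the Boolean join.
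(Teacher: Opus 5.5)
Your argument is correct. Its skeleton is the paper's: quote the Gaifman--Hales--Solovay theorem from \cite{koppel}, pass to the Stone space $L$, encode the countable generating set into a single $u$, show that $\{b\in B:\1_{b}\in G\}$ is a complete subalgebra, and conclude $G=\Co(L)$.

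Even your $u$ is essentially the paper's. There $u=g\circ\varphi$, where $\varphi:L\to K$ is induced by a homomorphism from the clopen algebra of the Cantor set $K$ (free over $\N$) into $B$ whose range contains the generators, and $g:K\to[0,1]$ is the inclusion. If the free generators are taken to be the ``$n$-th ternary digit is $2$'' sets, this $u$ is $\sum_{n}2\cdot 3^{-n}\1_{b_{n}}$.

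The difference lies in how the two technical steps are carried out. To get the $\1_{b_{n}}$ into $G$, the paper combines two facts:
\begin{itemize}
\item Stone--Weierstrass on $K$: the sublattice generated by $\1_{K},g$ is norm dense in $\Co(K)$.
\item A $\bigvee$-closed sublattice is Dedekind complete, hence relatively uniformly complete, hence norm closed once it contains $\1_{L}$.
\end{itemize}
You instead peel the generators off one at a time with $(6u-\1_{L})^{+}\wedge\1_{L}$ and rescaling, which needs only finitely many lattice-linear operations per generator.

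For the final step, the paper again uses norm closedness plus Stone--Weierstrass, and cites the fact that the components of $\1_{L}$ in a $\bigvee$-closed sublattice form a $\bigvee$-closed subalgebra. You use directly that clopen step functions are $\bigvee$-dense in $\Co(L)$. You also verify by hand that suprema in $\Co(L)$ of characteristic functions of clopen sets compute the Boolean join.

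Your route is more elementary and self-contained: no Stone duality for the homomorphism, no Cantor set, no uniform-completeness lemma. The paper's route isolates the norm-closedness of $\bigvee$-closed sublattices containing $\1_{L}$, which it reuses in the proof of Proposition \ref{day3}. Your $u$ also satisfies $u(L)\subset[0,1]$, which is all that later proof needs from $u$.
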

\begin{proof}
According to \cite[Theorems 13.1 and 1.37]{koppel} there is a complete Boolean algebra $B$ with $\card B\ge\kappa$ and a countable $C\subset B$ such that no proper $\bigvee$-closed subalgebra of $B$ contains $C$. Let $L$ be the Stone space of $B$, which is extremally disconnected (see \cite[Theorem 7.21]{koppel}).

Let $K\subset\left[0,1\right]$ be the standard Cantor set, and let $g:K\to\R$ be the inclusion map. It follows from the Stone-Weierstrass theorem (see e.g. \cite[Theorem 4.4]{erz}) that the vector lattice $E$ generated by $\1_{K},g$ is norm dense in $\Co\left(K\right)$. Let $A$ be the Boolean algebra of clopen sets of $K$, which is the free Boolean algebra over $\N$ (see \cite[Corollary 9.7(a)]{koppel}). Hence, there is a Boolean homomorphism $\Phi:A\to B$ whose range contains $C$. This homomorphism is induced by a continuous map $\varphi:L\to K$ via $\Phi\left(U\right)=\varphi^{-1}\left(U\right)$ (see \cite[Theorem 8.2]{koppel}). In turn, $\varphi$ generates a unital vector lattice homomorphism $J:\Co\left(K\right)\to\Co\left(L\right)$ via $f\mapsto f\circ\varphi$. Note that if we identify $A$ with the Boolean algebra $\left\{\1_{U},~U\subset K\mbox{ -- clopen}\right\}\subset\Co\left(K\right)$, and analogously for $B$, then $\Phi=\left.J\right|_{A}$. Also, note that $\card \Co\left(L\right)\ge \card A\ge\kappa$.

Let $u:=Jg$, and note that $\1_{L}=J\1_{K}$. Assume that $F\subset \Co\left(L\right)$ is a $\bigvee$-closed sublattice which contains $\1_{L},u$. Then, $F$ contains $JE$. Fix $c\in C$ and let $a\in A$ be such that $c=\Phi\left(a\right)$. Note that we view $A\subset\Co\left(K\right)$ and $B\subset\Co\left(L\right)$. Since $E$ is norm dense in $\Co\left(K\right)$, for every $n\in\N$ there is $e_{n}\in E$ such that $\|a-e_{n}\|\le\frac{1}{n}$, so that $\left|a-e_{n}\right|\le\frac{1}{n} \1_{K}$. It follows that $\left|Ja-Je_{n}\right|\le\frac{1}{n}\1_{L}$. Since $F$ is $\bigvee$-closed, it is Dedekind complete. Hence, it is  relatively uniformly complete (see \cite[Lemma 1.56]{ab0}). As it is also contains $\1_{L}$, the uniform convergence on both $\Co\left(L\right)$ and $F$ are given by the supremum norm, and so $F$ is norm closed in $\Co\left(L\right)$. Thus, $c=Ja\in F$.

Therefore, $C\subset F$; note that the set of components of $\1_{L}$ in $F$ is a $\bigvee$-closed subalgebra of $B$ (see \cite[Theorem 1.53(5)]{ab0}), which contains $C$, hence equals $B$ by construction. Thus, $F$ is norm closed in $\Co\left(L\right)$ and contains the characteristic functions of all clopen sets in $L$. Since the latter separate points of $L$, by Stone-Weierstrass theorem we conclude that $F=\Co\left(L\right)$.
\end{proof}

The following result is analogous to \cite[Corollary 13.2]{koppel} (see also \cite{gaifman} and \cite{hales}) and improves \cite[Theorem 1]{jakubikova} (see also \cite{jak1} and \cite{jak2}).

\begin{proposition}\label{nofree}
There are no free Dedekind complete or universally complete vector lattices over a set with more than one element.
\end{proposition}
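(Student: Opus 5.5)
Suppose, for a contradiction, that $(D,\iota)$ is a free Dedekind complete (resp. universally complete) vector lattice over a set $A$ with $\card A\ge 2$. Fix two distinct points $p,q\in A$. The strategy is the one behind \cite[Corollary 13.2]{koppel}: show that $\card D$ has to be at least $\card \Co(L)$ for arbitrarily large $\Co(L)$. The space $L$ comes from Proposition \ref{soloway} and is taken with $\card\Co(L)>\card D$; the contradiction then follows from a cardinality comparison.

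\textbf{Step 1: the map into $\Co(L)$.} Let $u\in\Co(L)_{+}$ be the element supplied by Proposition \ref{soloway}. Define $\varphi:A\to\Co(L)$ by $\varphi(p):=\1_{L}$, $\varphi(q):=u$, and $\varphi(a):=0$ otherwise. In the Dedekind complete case $\Co(L)$ is itself an object of $\mathbf{DVL}$, so freeness gives a $\bigvee$-continuous $\widehat{\varphi}:D\to\Co(L)$ with $\widehat{\varphi}\circ\iota=\varphi$. The range $\widehat{\varphi}D$ contains $\1_{L}$, which is a strong unit, so it is majorizing. Proposition \ref{closed} then shows that $\widehat{\varphi}D$ is a $\bigvee$-closed sublattice of $\Co(L)$ containing $\1_{L}$ and $u$. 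By Proposition \ref{soloway}, $\widehat{\varphi}D=\Co(L)$. Hence $\card D\ge\card\Co(L)\ge\kappa$ for every cardinal $\kappa$, which is impossible. Here two generators are genuinely needed, to hit both $\1_L$ and $u$; this is consistent with Example \ref{cfree}.

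\textbf{Step 2: the universally complete case.} Here $\Co(L)$ is not universally complete, so we target $\Co^{\8}(L)$ instead. We still map $p\mapsto\1_{L}$ and $q\mapsto u$, obtaining a $\bigvee$-continuous $\widehat{\varphi}:D\to\Co^{\8}(L)$. Now the range need not be majorizing, so we use Lemma \ref{closedu} with $f=\1_{L}\in\widehat{\varphi}D$. It gives that $\widehat{\varphi}D\cap I_{\1_{L}}$ is a $\bigvee$-closed sublattice of $I_{\1_{L}}$. Since $I_{\1_{L}}$ in $\Co^{\8}(L)$ is exactly $\Co(L)$, this set is a $\bigvee$-closed sublattice of $\Co(L)$ containing $\1_{L}$ and $u$. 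By Proposition \ref{soloway} it equals $\Co(L)$, and the same cardinality contradiction follows.

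\textbf{Main obstacle.} The delicate point is checking the hypotheses precisely. First, $\bigvee$-continuous maps between vector lattices are lattice homomorphisms, which Proposition \ref{closed} uses; this was recalled earlier in the paper. Second, $\bigvee$-closedness must be taken in the correct ambient space, namely $I_{\1_{L}}=\Co(L)$ rather than $\Co^{\8}(L)$. Third, $\Co^{\8}(L)$ has to be universally complete, which holds by \cite[Theorem 7.27]{ab0}. The remaining checks are routine.
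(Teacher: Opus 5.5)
Your proposal is correct and follows the paper's proof essentially step for step: map $p\mapsto\1_{L}$ and $q\mapsto u$ into $\Co(L)$ with $L,u$ from Proposition \ref{soloway}, use Proposition \ref{closed} (the range contains $\1_{L}$, so it is majorizing) to get surjectivity onto $\Co(L)$ and a cardinality contradiction, and in the universally complete case pass to $\Co^{\8}(L)$ and apply Lemma \ref{closedu} with $f=\1_{L}$, using $I_{\1_{L}}=\Co(L)$. You also correctly write $\varphi(q)=u$, where the paper's proof has the typo $\varphi(p)=u$.
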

\begin{proof}
Assume that $A$ contains distinct elements $p,q$. Let $\left(F,\iota\right)$ be a free Dedekind complete vector lattice over $A$. Let $\kappa$ be larger than $\card F$, and let $L$ and $u\in \Co\left(L\right)_{+}$ be given via Proposition \ref{soloway}. Let $\varphi:A\to\Co\left(L\right)$ be an arbitrary map such that $\varphi\left(p\right)=\1_{L}$ and $\varphi\left(p\right)=u$. By assumption there is a $\bigvee$-continuous operator $\widehat{\varphi}:F\to \Co\left(L\right)$ such that $\widehat{\varphi}\circ\iota=\varphi$. In particular, $\1_{L},u\in G:=\widehat{\varphi}F$, and so $G$ is majorizing in $\Co\left(L\right)$. By Proposition \ref{closed}, it follows that $G$ is $\bigvee$-closed in $\Co\left(L\right)$. Hence, by construction it has be equal to $\Co\left(L\right)$. It follows that $\widehat{\varphi}$ is a surjection, but it contradicts the assumption that $\card \Co\left(L\right)\ge\kappa> \card F$.\medskip

Now assume that $\left(F,\iota\right)$ is a free universally complete vector lattice over $A$. Arguing as before, we produce a $\bigvee$-continuous operator $\widehat{\varphi}:F\to \Co^{\8}\left(L\right)$ such that $\1_{L},u\in \widehat{\varphi}F$. Then, by Lemma \ref{closedu} $\widehat{\varphi}F\cap \Co\left(L\right)$ is a $\bigvee$-closed sublattice of $\Co\left(L\right)$. Hence, $\widehat{\varphi}H\cap \Co\left(L\right)= \Co\left(L\right)$, and so we reach a contradiction in the same way as before.
\end{proof}

Let $\mathbf{AVL}$ be the full subcategory of $\mathbf{DAOVS}$ whose objects are vector lattices. If $E$ is a free directed Archimedean ordered vector space over a set $A$, then it follows from Theorem \ref{supcont1} that $E^{\delta}$ is a free Dedekind complete vector lattice over $A$. Combining this with Proposition \ref{nofree} we get the following result.

\begin{corollary}
There are no free objects neither in $\mathbf{DAOVS}$ nor $\mathbf{AVL}$ over a set with more than one element.
\end{corollary}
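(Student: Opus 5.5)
We argue by contradiction, separately for $\mathbf{DAOVS}$ and for $\mathbf{AVL}$, and in both cases reduce to Proposition \ref{nofree}. Let $A$ be a set with at least two elements and suppose $(E,\iota)$ is a free object over $A$ in $\mathbf{DAOVS}$, respectively in $\mathbf{AVL}$. We will show that $(E^{\delta},\iota)$ is then a free Dedekind complete vector lattice over $A$, where $\iota$ is viewed as a map into $E^{\delta}$ through the inclusion $E\subset E^{\delta}$. This contradicts Proposition \ref{nofree}. In the $\mathbf{AVL}$ case $E$ is an Archimedean vector lattice, hence directed, so $E^{\delta}$ exists. Moreover, $\mathbf{AVL}$ and $\mathbf{DVL}$ are full subcategories of $\mathbf{DAOVS}$, with $\mathbf{DVL}\subset\mathbf{AVL}$.

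\emph{Existence of extensions.} Let $F$ be a Dedekind complete vector lattice and $\varphi:A\to F$ a map. Then $F\in|\mathbf{AVL}|\subset|\mathbf{DAOVS}|$, so freeness of $E$ gives a $\bigvee$-continuous $\widehat{\varphi}:E\to F$ with $\widehat{\varphi}\circ\iota=\varphi$. By Theorem \ref{supcont1}, (i)$\Rightarrow$(iv), there is a $\bigvee$-continuous $\widehat{\varphi}^{\delta}:E^{\delta}\to F^{\delta}$ extending $\widehat{\varphi}$. Since $F$ is Dedekind complete, $F^{\delta}=F$: $F$ is itself a Dedekind complete vector lattice containing $F$ as a supremum-dense subspace, so uniqueness of the Dedekind completion applies. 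Thus $\widehat{\varphi}^{\delta}$ is a morphism $E^{\delta}\to F$ in $\mathbf{DVL}$ with $\widehat{\varphi}^{\delta}\circ\iota=\varphi$.

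\emph{Uniqueness.} Let $S:E^{\delta}\to F$ be any $\bigvee$-continuous operator with $S\circ\iota=\varphi$. The inclusion $J:E\to E^{\delta}$ is $\bigvee$-continuous; this was used in (iv)$\Rightarrow$(v) of Theorem \ref{supcont1}. Hence $S|_{E}=SJ$ is a morphism in $\mathbf{DAOVS}$ (respectively $\mathbf{AVL}$) from $E$ to $F$ satisfying $S|_{E}\circ\iota=\varphi$. By uniqueness in the freeness of $E$, we get $S|_{E}=\widehat{\varphi}$. Since $E$ is supremum-dense in $E^{\delta}$, the uniqueness clause of Proposition \ref{line} (equivalently, of Theorem \ref{supcont1}) gives $S=\widehat{\varphi}^{\delta}$. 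Therefore $(E^{\delta},\iota)$ is free in $\mathbf{DVL}$ over $A$, which contradicts Proposition \ref{nofree}.

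The only point that needs real care is the identification $F^{\delta}=F$ together with the fact that the inclusion $E\hookrightarrow E^{\delta}$ is a morphism. Both are already available from the preliminaries and Theorem \ref{supcont1}, so the argument is essentially formal.
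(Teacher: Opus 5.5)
Your proposal is correct and follows the paper's own route: freeness of $E$ in $\mathbf{DAOVS}$ (or $\mathbf{AVL}$) transfers via Theorem \ref{supcont1} to freeness of $E^{\delta}$ in $\mathbf{DVL}$, which contradicts Proposition \ref{nofree}. You additionally spell out the existence and uniqueness details that the paper leaves implicit, namely $F^{\delta}=F$, the $\bigvee$-continuity of $E\hookrightarrow E^{\delta}$, and the uniqueness clause of Proposition \ref{line}.
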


We will now discuss a somewhat related topic. Let $E$ be an Archimedean vector lattice. A \emph{free completion} of $E$ is a pair $\left(\widehat{E},\iota\right)$, where $\widehat{E}$ is a Dedekind complete vector lattice and $\iota:E\to \widehat{E}$ is a vector lattice homomorphism, such that whenever $T:E\to F$ is a vector lattice homomorphism into a Dedekind complete vector lattice $F$, there is a unique supremum-continuous map $\widehat{T}:\widehat{E}\to F$ such that $T=\widehat{T}\circ\iota$. It is not hard to show that the free completion is unique once it exists. A similar definition can be given in the category of Boolean algebras, where the following result was proven in \cite{day}.

\begin{theorem}\label{day}
A Boolean algebra $A$ has a free Boolean algebra completion $\widehat{A}$ if and only if $A$ is superatomic (i.e. every subalgebra of $A$ is atomic). In this case $A\simeq Clop\left(K\right)$, where $K$ is scattered, and $\widehat{A}\simeq\Po\left(K\right)$, the powerset of $K$.
\end{theorem}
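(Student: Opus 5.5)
We would prove the two directions separately, working throughout with Stone duality: $A\simeq Clop\left(K\right)$ with $K$ the Stone space of $A$. We use the standard fact that $A$ is superatomic if and only if $K$ is scattered. Here the free completion is meant in the Boolean sense: a complete Boolean algebra $\widehat{A}$ with a homomorphism $\iota:A\to\widehat{A}$ such that every homomorphism from $A$ into a complete Boolean algebra extends uniquely to a complete homomorphism on $\widehat{A}$.

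\emph{Sufficiency.} Assume $K$ is scattered and let $\iota:Clop\left(K\right)\to\Po\left(K\right)$ be the inclusion. Let $h:A\to B$ be a homomorphism into a complete Boolean algebra $B$.

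\begin{itemize}
\item We put $b_{x}:=\bigwedge\left\{h\left(U\right):~x\in U\in Clop\left(K\right)\right\}$ for $x\in K$.
\item For $x\ne y$ we choose a clopen $U$ with $x\in U\not\ni y$. Then $b_{x}\wedge b_{y}\le h\left(U\right)\wedge h\left(K\backslash U\right)=0$, so the $b_{x}$ are pairwise disjoint.
\item The key claim is that $h\left(U\right)=\bigvee_{x\in U}b_{x}$ for every clopen $U$. We argue by contradiction, so suppose $c:=h\left(U\right)\wedge\neg\bigvee_{x\in U}b_{x}\ne 0$.
\item Let $\phi:\So\left(B\right)\to K$ be the continuous map dual to $h$, where $\So\left(B\right)$ is the Stone space of $B$, and let $V$ be the nonempty clopen set of $\So\left(B\right)$ corresponding to $c$. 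Then $\phi\left(V\right)$ is a nonempty closed subset of $U$.
\item Since $K$ is scattered, $\phi\left(V\right)$ has a relatively isolated point $x$. We pick a clopen $W$ with $W\cap\phi\left(V\right)=\left\{x\right\}$.
\item Then $c\wedge h\left(W\right)\ne 0$, because $V\cap\phi^{-1}\left(W\right)\ne\varnothing$. Also, for any clopen $U'\ni x$ we have $V\cap\phi^{-1}\left(W\right)\subset\phi^{-1}\left(U'\right)$, so $c\wedge h\left(W\right)\le b_{x}$. This contradicts $c\wedge b_{x}=0$.
\end{itemize}

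Applying the claim to $U=K$ shows that $\left\{b_{x}\right\}_{x\in K}$ is a partition of unity in $B$. Hence $S\mapsto\bigvee_{x\in S}b_{x}$ is a complete homomorphism $\Po\left(K\right)\to B$, and by the claim it extends $h$. For uniqueness, observe that in $\Po\left(K\right)$ we have $\left\{x\right\}=\bigwedge\left\{U\in Clop\left(K\right):~x\in U\right\}$. So any complete extension must send $\left\{x\right\}$ to $b_{x}$, and is therefore determined on all of $\Po\left(K\right)$, since every subset is the join of its singletons. This also yields $\widehat{A}\simeq\Po\left(K\right)$, because free completions are unique.

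\emph{Necessity.} We mimic Propositions \ref{soloway} and \ref{nofree}. If $A$ is not superatomic, then $K$ is not scattered, so $K$ maps continuously onto the Cantor set. Dually, $A$ contains a copy of the free Boolean algebra $Fr\left(\N\right)$.

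\begin{itemize}
\item Suppose $\left(\widehat{A},\iota\right)$ exists, and fix $\kappa>\card\widehat{A}$.
\item By \cite[Theorem 13.1]{koppel}, there is a complete Boolean algebra $B$ with $\card B\ge\kappa$ and a countable $C\subset B$ such that no proper complete subalgebra of $B$ contains $C$.
\item Map $Fr\left(\N\right)$ onto a subalgebra containing $C$, and extend this map to a homomorphism $h:A\to B$ using Sikorski's extension theorem ($B$ is complete, hence injective).
\item Let $\widehat{h}:\widehat{A}\to B$ be the complete homomorphism extending $h$. As in Proposition \ref{closed}, the range of a complete homomorphism defined on a complete algebra is a complete subalgebra. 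Here the range contains $C$, so it equals $B$.
\item This gives $\card B\le\card\widehat{A}<\kappa$, a contradiction.
\end{itemize}

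We expect the main obstacle to be the key claim $h\left(U\right)=\bigvee_{x\in U}b_{x}$ in the sufficiency part. This is exactly where scatteredness enters, through the isolated-point argument in the Stone space of $B$. The necessity direction is essentially the Gaifman–Hales–Solovay mechanism already exploited in Proposition \ref{nofree}.
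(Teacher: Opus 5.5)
Your proof is correct. The paper itself does not prove this theorem; it quotes it from Day's work. What the paper proves is the topological reformulation, Corollary \ref{day2}, and the vector-lattice analogue, Proposition \ref{day3}, so the comparison is with those.

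\textbf{Necessity.} Your argument is the same mechanism the paper uses in Propositions \ref{nofree} and \ref{day3}. It combines a Gaifman--Hales--Solovay algebra from Koppelberg's Theorem 13.1, the observation (as in Proposition \ref{closed}) that a complete homomorphism out of a complete algebra has $\bigvee$-closed range, and a cardinality count. The embedding $Fr(\N)\hookrightarrow A$ and Sikorski's theorem play the roles that the embedding $\Co[0,1]\hookrightarrow\Co(K)$ and the extension of homomorphisms from majorizing sublattices play in the paper.

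\textbf{Sufficiency.} Here you take a genuinely different and shorter route than the paper's proof of Corollary \ref{day2}.
\begin{itemize}
\item Dually, your key claim for $U=K$ says that the union of the interiors of the fibres $\phi^{-1}(x)$ is dense in the Stone space of $B$, i.e.\ that $\phi$ is locally constant on a dense open set.
\item The paper builds that dense open set by transfinite induction over the Cantor--Bendixson levels $K^{\alpha}$. You get it in one step from an isolated point of the compact image $\phi(V)$.
\item The same trick would shorten Corollary \ref{day2}. For a nonempty clopen $V\subset M$, take an isolated point $x$ of $\varphi(V)$ and an open $W$ with $W\cap\varphi(V)=\{x\}$. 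Then $V\cap\varphi^{-1}(W)$ is a nonempty open set on which $\varphi\equiv x$.
\item You build the extension algebraically as $S\mapsto\bigvee_{x\in S}b_{x}$. This avoids extending $\varphi|_{U}$ to $\beta K$ via extremal disconnectedness and the nowhere-density criterion for complete homomorphisms.
\item You also prove uniqueness, via $\{x\}=\bigwedge\{U\in Clop(K):~x\in U\}$ in $\Po(K)$. Corollary \ref{day2} and the sufficiency part of Proposition \ref{day3} do not address uniqueness.
\end{itemize}
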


Recall that a topological space is called \emph{scattered} if every subset has an isolated point. Let us reinterpret the second part of Theorem \ref{day} in terms of Stone duality (see \cite[Chapter 3]{koppel}). First, note that $\Po\left(K\right)\simeq Clop\left(\beta K\right)$, where $\beta K$ is the Stone-Cech compactfication of $K$ taken in the discrete topology. The inclusion of $Clop \left(K\right)$ into $\Po\left(K\right)$ corresponds to the extension $p:\beta K\to K$ of the identity map on $K$. Also, $Clop\left(K\right)$ is superatomic if and only if $K$ is scattered (see \cite[Remark 17.2]{koppel}) and complete if and only if $K$ is extremally disconnected (see \cite[Proposition 7.21]{koppel}). Finally, the continuous maps corresponding to complete (supremum continuity) Boolean homomorphisms have the property that the pre-images of nowhere dense closed sets are nowhere dense\footnote{The converse is also true, but we will not need this, and could not find a reference for it.} (this follows from \cite[Exercise 313X(q)]{fremlin}); for a continuous map into $\beta K$ this simply means that the pre-image of $\beta K\backslash K$ is nowhere dense. Thus, we get the following topological result, which we also supply with a direct topological proof.



\begin{corollary}\label{day2}
Let $K$ and $M$ be compact Hausdorff spaces, moreover $K$ is scattered and $M$ is extremally disconnected. If $\varphi:M\to K$ is continuous, there is a continuous map $\widehat{\varphi}:M\to\beta K$ such that $\widehat{\varphi}^{-1}\left(\beta K\backslash K\right)$ is nowhere dense in $M$, and $\varphi=p\circ \widehat{\varphi}$.
\end{corollary}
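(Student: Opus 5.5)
We want a direct topological proof. Let $D:=\{x\in K:\ x \text{ isolated in } K\}$ and, more generally, use the scattered structure. The natural idea is to define $\widehat{\varphi}$ on a dense open subset of $M$ with values in $K$ viewed as a discrete space, and then extend it to all of $M$ using extremal disconnectedness.

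\emph{Step 1: a dense open set with locally constant $\varphi$.} I claim that the union $W$ of all open sets $V\subset M$ on which $\varphi$ is constant is dense in $M$. Let $U\subset M$ be nonempty open. The set $\varphi(\overline{U})$ is a nonempty closed subset of the scattered space $K$, so it has a point $x$ isolated in $\varphi(\overline{U})$. Choose an open $O\ni x$ in $K$ with $O\cap\varphi(\overline{U})=\{x\}$. Then $\varphi^{-1}(O)\cap U$ is open, nonempty (it contains a preimage of $x$: the point $x=\varphi(y)$ with $y\in\overline{U}$ gives $\varphi^{-1}(O)\cap U\neq\varnothing$, since $\varphi^{-1}(O)$ is an open neighbourhood of $y$), and $\varphi$ is identically $x$ on it. Hence $W\cap U\ne\varnothing$.

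\emph{Step 2: extension to $M$.} Define $\psi:W\to K_{d}$ (the set $K$ with the discrete topology) by $\psi=\varphi|_W$; this map is continuous by Step 1. A dense open subset of an extremally disconnected compact space is $C^{*}$-embedded; this is the standard fact that dense subsets of extremally disconnected spaces, and in particular dense open ones, are $C^*$-embedded, and I would prove it directly via disjoint open sets having disjoint closures. Since $\beta K$ is compact Hausdorff and $\psi:W\to K\subset\beta K$ is continuous, $C^{*}$-embeddedness of $W$ in $M$ (equivalently $\beta W=\beta M$-type extension, $M$ being compact) yields a continuous extension $\widehat{\varphi}:M\to\beta K$. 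Concretely, bounded continuous functions on $W$ extend, and maps into compact Hausdorff spaces embed into cubes $[0,1]^{I}$, so coordinatewise extension works, with closedness of $\beta K$ keeping the values in $\beta K$.

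\emph{Step 3: the two properties.} The maps $p\circ\widehat{\varphi}$ and $\varphi$ are continuous into the Hausdorff space $K$ and agree on the dense set $W$, because there $p\circ\widehat\varphi=p\circ\psi=\varphi$. Hence they agree everywhere. Moreover $\widehat{\varphi}(W)\subset K$, so $\widehat{\varphi}^{-1}(\beta K\backslash K)\subset M\backslash W$, which is closed with empty interior, hence nowhere dense.

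The main obstacle I expect is Step 1, specifically checking that $\varphi^{-1}(O)\cap U$ is nonempty when $x$ is only attained on $\overline{U}$; the neighbourhood argument above handles it. The $C^{*}$-embedding fact is the only other nontrivial input, and it is classical (Gillman–Jerison).
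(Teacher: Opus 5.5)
Your proof is correct. Its overall architecture matches the paper's:
\begin{itemize}
\item find a dense open subset of $M$ on which $\varphi$ is locally constant;
\item extend $\varphi$ from it to $\widehat{\varphi}:M\to\beta K$ using extremal disconnectedness (the paper cites Lemma 7.25 of Aliprantis--Burkinshaw plus an embedding into a Tychonoff cube, which is your $C^{*}$-embedding argument);
\item conclude both properties by density.
\end{itemize}

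The genuine difference is how the dense open set is produced. The paper stratifies $K$ by its Cantor--Bendixson derivatives $K_{\alpha}$ and sets $U_{\alpha}=\varphi^{-1}(K^{\alpha})\cap\Int\varphi^{-1}(K_{\alpha})$. It then proves by transfinite induction that $\bigcup_{\beta\le\alpha}U_{\beta}$ is dense in $\bigcup_{\beta\le\alpha}\varphi^{-1}(K^{\beta})$. This already uses extremal disconnectedness, to know that $\Int\varphi^{-1}(K_{\alpha})$ is clopen when computing $\overline{U_{\alpha}}$. Local constancy on $U=\bigcup_{\alpha}U_{\alpha}$ is checked afterwards.

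Your Step 1 gets density in one stroke: pick an isolated point of the image of an arbitrary nonempty open set. It needs no ordinals and nothing about $M$ beyond continuity of $\varphi$. So it shows that for any continuous map into a scattered space, the locus of local constancy is dense open, and it confines extremal disconnectedness to the extension step.

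The two sets in fact coincide. If $\varphi\equiv x$ near $y$ with $x\in K^{\alpha}$, then $y\in U_{\alpha}$, so the paper's $U$ is exactly your $W$.

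A minor simplification: you do not need $\overline{U}$ or its closedness. An isolated point of $\varphi(U)$ itself already gives a nonempty open subset of $U$ on which $\varphi$ is constant.
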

\begin{proof}
Since $K$ is scattered, there is an ordinal $\alpha_{0}$ such that the $\alpha_{0}$-th derivative $K_{\alpha_{0}}$ (see \cite[Construction 17.7]{koppel}) of $K$ is empty (see \cite[Proposition 17.8]{koppel}). Fix $\alpha\le\alpha_{0}$.

Denote the set of isolated points of $K_{\alpha}$ by $K^{\alpha}$ (so that $K_{\alpha+1}=K_{\alpha}\backslash K^{\alpha}$); also, let $L_{\alpha}:=\varphi^{-1}\left(K_{\alpha}\right)$ and $L^{\alpha}:=\varphi^{-1}\left(K^{\alpha}\right)$. Note that $L_{\alpha}$ is closed in $L$, $L^{\alpha}$ is open in $L_{\alpha}$ and $K_{\alpha}=K\backslash \bigcup\limits_{\beta<\alpha}K^{\beta}$, which implies that $L_{\alpha}=L\backslash \bigcup\limits_{\beta<\alpha}L^{\beta}$.

Define also $U_{\alpha}:=L^{\alpha}\backslash \overline{\bigcup\limits_{\beta<\alpha}L^{\beta}}=L^{\alpha}\cap \Int L_{\alpha}$. Note that since $L^{\alpha}$ is open in $L_{\alpha}$, it follows that $U_{\alpha}$ is open in $L$. Also, since $L_{\alpha}$ is closed and $L$ is extremally disconnected, we get that $\Int L_{\alpha}$ is clopen, and so $\overline{U_{\alpha}}=\overline{L^{\alpha}}\cap \Int L_{\alpha}=\overline{L^{\alpha}}\backslash \overline{\bigcup\limits_{\beta<\alpha}L^{\beta}}$.

Let us show that $\bigcup\limits_{\beta\le\alpha}U_{\beta}$ is dense in $\bigcup\limits_{\beta\le\alpha}L^{\beta}$. The case $\alpha=0$ follows from $U_{0}=L_{0}$. Assume that the claim is proven for every $\alpha'<\alpha$. Then, $\overline{\bigcup\limits_{\beta<\alpha}U_{\beta}}$ is a closed set which is dense in $L_{\beta}$, for every $\beta<\alpha$, and so it contains $\overline{\bigcup\limits_{\beta<\alpha}L^{\beta}}$. Therefore, $\overline{\bigcup\limits_{\beta\le\alpha}U_{\beta}}=\overline{\bigcup\limits_{\beta<\alpha}U_{\beta}}\cup \overline{U_{\alpha}}\supset \overline{\bigcup\limits_{\beta<\alpha}L^{\beta}}\cup \overline{L^{\alpha}}\backslash \overline{\bigcup\limits_{\beta<\alpha}L^{\beta}}=\overline{\bigcup\limits_{\beta\le\alpha}L^{\beta}}$.

In particular, it follows that $U:=\bigcup\limits_{\beta\le\alpha_{0}}U_{\beta}$ is dense in $\bigcup\limits_{\beta\le\alpha_{0}}L^{\beta}=L\backslash L_{\alpha_{0}}=L$. Let us show that $\left.\varphi\right|_{U}$ is continuous into $K$ with the discrete topology. If $x\in K$ there is $\alpha\le\alpha_{0}$ such that $x\in K^{\alpha}$. Then, $\varphi^{-1}\left(x\right)\cap U=\varphi^{-1}\left(x\right)\cap U\cap L^{\alpha}=\varphi^{-1}\left(x\right)\cap U_{\alpha}$. Since $\varphi$ continuously maps $U_{\alpha}$ into $K^{\alpha}$, and $x$ is an isolated point in the latter, we conclude that $\varphi^{-1}\left(x\right)$ is open in $U_{\alpha}$, and therefore open in $U$.

It follows that $\left.\varphi\right|_{U}$ is continuous into $\beta K$. As the latter is compact, and $U$ is a dense open set in an extremally disconnected space $L$, $\left.\varphi\right|_{U}$ can be extended to a continuous $\widehat{\varphi}:L\to \beta K$ (this follows from \cite[Lemma 7.25]{ab0} and the fact that $K$ can be embedded into a Tychonoff cube of a high enough dimension). Clearly, $\widehat{\varphi}^{-1}\left(\beta K\backslash K\right)\subset L\backslash U$ is nowhere dense in $L$. Moreover, since $p\circ \widehat{\varphi}$ agrees with $\varphi$ on $U$, by continuity we get $p\circ \widehat{\varphi}=\varphi$.
\end{proof}

The following result is a partial analogue of Theorem \ref{day} for vector lattices (see also \cite{macula} where the free completion was constructed in the for the class of linear maps which preserve suprema of sets of bounded cardinality).

\begin{proposition}\label{day3}
Let $E$ be an Archimedean vector lattice with a strong unit. Then, $E$ has a free completion $\widehat{E}$ if and only if $E$ embeds as a norm-dense sublattice of $\Co\left(K\right)$, where $K$ is scattered. In this case $\widehat{E}\simeq\ell_{\8}\left(K\right)$.
\end{proposition}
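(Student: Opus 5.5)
Both directions go through the Kakutani--Krein representation: since $E$ has a strong unit $u$, we may regard $E$ as a norm-dense sublattice of $\Co\left(K\right)$ with $u=\1_{K}$. Here $K$ is compact Hausdorff, and the unital homomorphisms $E\to\R$ are exactly the point evaluations at points of $K$. A homomorphism from $E$ into a Dedekind complete $F$ maps into the ideal generated by the image of $u$. That ideal is Dedekind complete with a strong unit, so it is $\Co\left(M\right)$ for some extremally disconnected $M$. Hence, up to bookkeeping, every homomorphism $T:E\to F$ comes from a continuous map $\varphi:M\to K$ via $e\mapsto e\circ\varphi$; the extension to $\Co\left(K\right)$ is automatic because $T$ is norm continuous for the unit norms.

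\emph{Sufficiency.} Assume $K$ is scattered and define $\iota:E\to\ell_{\8}\left(K\right)$ as the restriction of functions to the points of $K$. Note that $\ell_{\8}\left(K\right)\simeq\Co\left(\beta K\right)$, and that $\iota$ corresponds to $p:\beta K\to K$. Given $T:E\to F$, reduce as above to $T:E\to\Co\left(M\right)$ with $M$ extremally disconnected; this requires first handling the non-unital case by passing to the band generated by $Tu$. Corollary \ref{day2} then provides $\widehat{\varphi}:M\to\beta K$ with $p\circ\widehat{\varphi}=\varphi$ and $\widehat{\varphi}^{-1}\left(\beta K\backslash K\right)$ nowhere dense. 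Set $\widehat{T}f:=f\circ\widehat{\varphi}$. This is a homomorphism extending $T$. To see that it is order continuous, let $f_{\gamma}\downarrow0$ in $\ell_{\8}\left(K\right)$, which means pointwise convergence on $K$. Then $\inf f_{\gamma}\circ\widehat{\varphi}$ vanishes on the dense open set $\widehat{\varphi}^{-1}\left(K\right)$, more precisely on its interior, which is dense. In $\Co\left(M\right)$ an infimum vanishing on a dense set is zero. Uniqueness holds because $\iota E$ is order dense in $\ell_{\8}\left(K\right)$: on a scattered $K$ every point is isolated in some derived set, so the indicators $\1_{\{x\}}$ are infima of elements of $\iota E$. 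Consequently every element of $\ell_{\8}\left(K\right)$ is a supremum of infima from $\iota E$, and any order continuous extension is determined by $T$. Order continuity of $\widehat{T}$ then lets us extend from the unit ideal to the whole space.

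\emph{Necessity.} Suppose $\left(\widehat{E},\iota\right)$ is a free completion. Every point evaluation $\delta_{x}:E\to\R$ extends to an order continuous homomorphism of $\widehat{E}$. Combining these extensions gives an order continuous homomorphism $\widehat{E}\to\R^{K}$, which lands in $\ell_{\8}\left(K\right)$ on the ideal generated by $\iota u$. If $K$ is not scattered, the plan is to imitate the proof of Proposition \ref{nofree}. Take a continuous surjection of a closed subset of $K$ onto the Cantor set, or equivalently a perfect subset, which yields surjective homomorphisms of $E$ into $\Co\left(L\right)$ for the large $L$ of Proposition \ref{soloway}. To realize $\varphi:L\to K$ with image generating as in that proof, compose the Cantor map from Proposition \ref{soloway} with a map from the Cantor set into $K$. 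Such a map exists when $K$ is not scattered: $K$ then contains a closed set mapping onto $\left[0,1\right]$, and the Cantor set maps onto that closed set after lifting via projectivity, for instance by taking a closed subset of $K$ homeomorphic to a Cantor-like image. The image of $\widehat{T}$ is majorizing and $\bigvee$-closed by Proposition \ref{closed}, and it contains $\1_{L}$ and $u$. By Proposition \ref{soloway} it is therefore all of $\Co\left(L\right)$, which contradicts the cardinality bound once $\kappa>\card\widehat{E}$. We conclude that $K$ is scattered, and by uniqueness of free completions $\widehat{E}\simeq\ell_{\8}\left(K\right)$.

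The main obstacle is the necessity step: producing, from a non-scattered $K$, a homomorphism $E\to\Co\left(L\right)$ whose image contains $\1_{L}$ and the element $u$ of Proposition \ref{soloway}. I would first try to use that a non-scattered compact space maps continuously onto $\left[0,1\right]$ (a standard fact), so that $g\in\Co\left(\left[0,1\right]\right)$ pulls back into $\Co\left(K\right)$. Since $E$ is only dense, elements of $E$ approximating that pullback must then be handled with the same norm-closure argument as in Proposition \ref{soloway}.
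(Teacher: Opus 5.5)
Your sufficiency direction is the paper's argument: reduce to a unital $T:\Co\left(K\right)\to\Co\left(M\right)$, apply Corollary \ref{day2}, and set $\widehat{T}g=g\circ\widehat{\varphi}$. Your direct order-continuity check and your uniqueness argument are fine additions. Two small corrections there: $\iota E$ is not \emph{order dense} in $\ell_{\8}\left(K\right)$, and what you actually use is that every element is a supremum of infima of elements of $\iota E$. That property holds for every compact $K$ and needs no scatteredness.

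The gap is in necessity, exactly at the step you flag. You never construct a homomorphism $T:\Co\left(K\right)\to\Co\left(L\right)$ with $T\1_{K}=\1_{L}$ and $u\in T\Co\left(K\right)$, and the route you sketch cannot work. Factoring through a map $\chi$ from the Cantor set into $K$ fails because a non-scattered $K$ need not contain any infinite compact metrizable subset. Take $K=\beta\N$, which is not scattered since $\beta\N\backslash\N$ has no isolated points. Every continuous image of the Cantor set in $\beta\N$ is finite, because $\beta\N$ has no nontrivial convergent sequences. So every $h\circ\chi\circ\varphi$ has finite range. But $u$ has infinite range: otherwise a finite-dimensional, hence $\bigvee$-closed, sublattice of $\Co\left(L\right)$ would contain $\1_{L},u$. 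For the same reason the Cantor set cannot map onto a closed subset of $K$ that maps onto $\left[0,1\right]$. Also, projectivity is a property of extremally disconnected spaces such as $L$, not of the Cantor set. Knowing that $K$ maps onto $\left[0,1\right]$ only gives you an element $\theta\in\Co\left(K\right)$, not a map into $\Co\left(L\right)$.

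The missing idea is to use the extremal disconnectedness of $L$ (equivalently, Dedekind completeness of $\Co\left(L\right)$), not anything about the Cantor set. Take a continuous surjection $\theta:K\to\left[0,1\right]$. By Gleason's theorem $L$ is projective, so $u:L\to\left[0,1\right]$ lifts to a continuous $\psi:L\to K$ with $\theta\circ\psi=u$. Then $Tf:=f\circ\psi$ is a unital homomorphism with $T\theta=u$. The paper does the same thing algebraically: $f\circ\theta\mapsto f\circ u$ is a homomorphism on the majorizing sublattice $\left\{f\circ\theta,~f\in\Co\left[0,1\right]\right\}$ of $\Co\left(K\right)$. It extends to a homomorphism $\Co\left(K\right)\to\Co\left(L\right)$ because $\Co\left(L\right)$ is Dedekind complete.

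With this $T$ restricted to $E$, your remaining steps go through. $\widehat{T}\widehat{E}$ contains $\1_{L}$ and is $\bigvee$-closed by Proposition \ref{closed}, hence norm closed. It therefore contains $u=T\theta$, which lies in the norm closure of $TE$. Proposition \ref{soloway} together with $\kappa>\card\widehat{E}$ then gives the contradiction.
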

\begin{proof}
Since $E$ is has a strong unit we may assume that $E$ is a norm-dense sublattice of $\Co\left(K\right)$, for some compact Hausdorff space $K$, and $\1_{K}\in E$ (see \cite[Theorem 45.3]{lz}). Our goal is to show that there is a free completion $\left(\widehat{E},\iota\right)$ of $E$ if and only if $K$ is scattered.\medskip

Necessity: Assume that $K$ is not scattered, and let $\kappa$ be greater than the cardinality of $\widehat{E}$. There is a continuous surjection $\varphi:K\to\left[0,1\right]$ (see \cite[Theorem 8.5.4]{semadeni}). The composition operator generated by $\varphi$ is an isomorphic embedding of $\Co\left[0,1\right]$ into $\Co\left(K\right)$ as a majorizing sublattice.

Let $L$ and $u\in\Co\left(L\right)$ be given by Proposition 3.4; note that $u\left(L\right)\subset \left[0,1\right]$. The composition operator generated by $u$ is vector lattice homomorphism $T:\Co\left[0,1\right]\to \Co\left(L\right)$. Note that $T$ maps the identity function on $\left[0,1\right]$ into $u$. As $\Co\left[0,1\right]$ is a majorizing sublattice of $\Co\left(K\right)$, and $\Co\left(L\right)$ is Dedekind complete, $T$ can be extended to a vector lattice homomorphism from $\Co\left(K\right)$ into $\Co\left(L\right)$ (see \cite[Theorem 2.29]{ab}).

By assumption there is a supremum continuous operator $\widehat{T}:\widehat{E}\to \Co\left(L\right)$ such that $\left.T\right|_{E}=\widehat{T}\circ\iota$. Since $\1_{K}\in E$, it follows that $\1_{L}=T\1_{K}=\widehat{T}\left(\iota\left(\1_{K}\right)\right)\in \widehat{T}\widehat{E}$. Arguing as in the proof of Proposition \ref{soloway} we see that $\widehat{T}\widehat{E}$ is norm-closed in $\Co\left(L\right)$. As $T$ is continuous, $E$ is norm-dense in $\Co\left(K\right)$, and $TE\subset \widehat{T}\widehat{E}$, we conclude that $u\in T\Co\left(K\right)\subset \widehat{T}\widehat{E}$. Arguing as in the proof of Proposition \ref{nofree} we can achieve the desired contradiction.\medskip

Sufficiency: Assume that $F$ is a Dedekind complete vector lattice and $T:\Co\left(K\right)\to F$ is a homomorphism. It is easy to see that the range of $T$ is contained in the principal ideal of $T\1_{K}$, and so we may assume that $F$ is equal to that principal ideal (which is still Dedekind complete). In fact, combining \cite[Theorem 45.4]{lz} with \cite[Theorem 1.50]{ab0} allows us to assume WLOG that $F\simeq\Co\left(M\right)$, for some extremally disconnected compact Hausdorff space $M$, and $T\1_{K}=\1_{M}$. Hence, there is a continuous $\varphi:M\to K$ such that $Tf=f\circ\varphi$, for every $f\in \Co\left(K\right)$ (see \cite[Exercise 2.6]{ab}).

Let $\widehat{\varphi}:M\to\beta K$ be given by Corollary \ref{day2}, and let $\widehat{T}:\Co\left(\beta K\right)\to \Co\left(M\right)$ be given via $\widehat{T}g:=g\circ\widehat{\varphi}$, for $g\in \Co\left(\beta K\right)$. As the pre-images of nowhere dense closed sets in $\beta K$ are nowhere dense in $M$, it follows from \cite[Theorem 7.1(iii) and Proposition 6.3]{erz} that $\widehat{T}$ is supremum-continuous. Note that $\Co\left(\beta K\right)\simeq \ell_{\8}\left(K\right)$, and the inclusion of $\Co\left(K\right)$ into $\ell_{\8}\left(K\right)$ via this identification corresponds to $\iota:\Co\left(K\right)\to \Co\left(\beta K\right)$ given by the composition with $p:\beta K\to K$, as in Corollary \ref{day2}. Thus, we have $T=\widehat{T}\circ\iota$, as required.
\end{proof}

One may wonder whether the fact that $\Co\left(K\right)^{**}=\ell_{\8}\left(K\right)$, for scattered $K$ (see \cite[Corollary 19.7.7]{semadeni}) can be used in the proof.

\begin{question}
Is there a result, similar to Proposition \ref{day3} for Archimedean vector lattices which do not necessarily have a strong unit?
\end{question}

\section{Dedekind and universal completions as reflectors}

Recall that a category $\mathcal{D}$ is a \emph{subcategory} of a category $\mathcal{C}$ if $\left|\mathcal{D}\right|\subset \left|\mathcal{C}\right|$, and for every $D_{1},D_{2}\in \left|\mathcal{D}\right|$ we have $\mathcal{D}\left(D_{1},D_{2}\right)\subset\mathcal{C}\left(D_{1},D_{2}\right)$. If the latter inclusion is an equality for all $D_{1},D_{2}$, we say that $\mathcal{D}$ is a \emph{full subcategory} of $\mathcal{C}$.

Let $\mathcal{D}$ be a full subcategory of a category $\mathcal{C}$ and let $C\in \left|\mathcal{C}\right|$. A \emph{reflector} of $C$ in $\mathcal{D}$ is a pair $\left(C^{\mathcal{D}},\iota_{C}\right)$, where $C^{\mathcal{D}}\in \left|\mathcal{D}\right|$ and a morphism $\iota_{C}\in\mathcal{C}\left(C,C^{\mathcal{D}}\right)$ are such that for every $D\in\left|\mathcal{D}\right|$ and $\varphi\in \mathcal{C}\left(C,D\right)$ there is a unique $\varphi^{\mathcal{D}}\in\mathcal{D}\left(C^{\mathcal{D}},D\right)$ such that
$\varphi=\varphi^{\mathcal{D}}\circ\iota_{C}$. The standard ``diagram chasing'' argument shows that the reflector is essentially unique. If $\iota_{C}$ is clear from the context, we will simply refer to $C^{\mathcal{D}}$ as the reflector.

If $C_{1},C_{2}$ both have reflectors $C^{\mathcal{D}}_{1},C^{\mathcal{D}}_{2}$, respectively, and $\varphi\in\mathcal{C}\left(C_{1},C_{2}\right)$ then there is a unique $\varphi^{\mathcal{D}}\in\mathcal{D}\left(C^{\mathcal{D}}_{1},C^{\mathcal{D}}_{2}\right)$ such that $\varphi^{\mathcal{D}}\circ\iota_{C_{1}}=\iota_{C_{2}}\circ\varphi$. If $C^{\mathcal{D}}$ is a reflector of $C$ in $\mathcal{D}$, and $C^{\mathcal{E}}$ is a reflector of $C^{\mathcal{D}}$ in a full subcategory $\mathcal{E}$ of $\mathcal{D}$, then $C^{\mathcal{E}}$ is a reflector of $C$ in $\mathcal{E}$.

A full subcategory $\mathcal{D}$ of a category $\mathcal{C}$ is called \emph{reflective} if for every object $C\in \left|\mathcal{C}\right|$ has a reflector in $\mathcal{D}$. It follows from the comment above that being a reflective subcategory is a transitive relation.\medskip

Note that among $\mathbf{AOVS}$, $\mathbf{DAOVS}$, $\mathbf{DVL}$, and $\mathbf{UVL}$ each next entry is a full subcategory of the previous one. In fact, more is true.

\begin{proposition}\label{universal}
\item[(i)] $\mathbf{DVL}$ is a reflective subcategory of $\mathbf{DAOVS}$ with the Dedekind completion being the reflector.
\item[(ii)] $\mathbf{UVL}$ is a reflective subcategory of $\mathbf{DVL}$ with the universal completion being the reflector.
\end{proposition}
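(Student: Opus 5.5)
For part (i), I will take $\iota_{E}:E\to E^{\delta}$ to be the inclusion of a directed Archimedean ordered vector space into its Dedekind completion. First I check that it is a morphism of $\mathbf{DAOVS}$, i.e.\ supremum-continuous. This is the implication (iv)$\Rightarrow$(v) of Theorem \ref{supcont1}, which rests on \cite[Proposition 2.3.27]{kv}. It also follows from the general remark in Section \ref{sectsc}: $E$ is supremum-dense and hence also infimum-dense in $E^{\delta}$, so the embedding is $\bigvee$-continuous.

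Now let $F$ be a Dedekind complete vector lattice and $T:E\to F$ a supremum-continuous operator. Then $F$ is itself directed and Archimedean, and $F^{\delta}=F$, since $F$ is Dedekind complete and supremum-dense in itself, and the Dedekind completion is unique. The implication (i)$\Rightarrow$(iv) of Theorem \ref{supcont1} gives a supremum-continuous $T^{\delta}:E^{\delta}\to F^{\delta}=F$ extending $T$, that is, $T=T^{\delta}\circ\iota_{E}$. Uniqueness of such a morphism comes from the final clause of Proposition \ref{line}: a supremum-continuous map on $E^{\delta}$ is determined by its restriction to the supremum-dense subspace $E$. So $(E^{\delta},\iota_{E})$ is a reflector. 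The only delicate point is the identification $F^{\delta}=F$, and it follows directly from uniqueness.

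For part (ii), let $E$ be a Dedekind complete vector lattice and $\iota:E\to E^{u}$ the inclusion into its universal completion. $E$ is order dense in $E^{u}$, so it is supremum-dense, hence infimum-dense, and the inclusion is $\bigvee$-continuous, i.e.\ an order continuous lattice homomorphism. Given a universally complete $F$ and a supremum-continuous $T:E\to F$, which is an order continuous lattice homomorphism by \cite[Theorem 3.10]{erz}, I need an extension $\widetilde{T}:E^{u}\to F$ that is again an order continuous lattice homomorphism. Existence is Fremlin's theorem on extending order continuous Riesz homomorphisms from a Dedekind complete (or Archimedean) space to its universal completion with values in a universally complete space, which I will quote. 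If I had to prove it directly, I would define $\widetilde{T}$ on $E^{u}_{+}$ by $\widetilde{T}x:=\bigvee T\left(x^{\Downarrow_{E_{+}}}\right)$. The existence of this supremum is the main obstacle. The idea is to decompose $x$ over a maximal disjoint family of components on which it is bounded by elements of $E$, and to use that the images of disjoint elements are disjoint and hence bounded in $F$ by universal completeness. After that, additivity, the homomorphism property and order continuity have to be checked. Uniqueness is again the last clause of Proposition \ref{line}, because $E$ is supremum-dense in $E^{u}$.

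Finally, I note that by the transitivity remark preceding the proposition, $E\mapsto (E^{\delta})^{u}$ gives a reflector of $\mathbf{DAOVS}$ in $\mathbf{UVL}$.
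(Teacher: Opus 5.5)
Part (i) of your proposal is the paper's argument: Theorem \ref{supcont1}, (i)$\Rightarrow$(iv), applied with $F$ Dedekind complete (so $F^{\delta}=F$). Uniqueness comes from Proposition \ref{line}, because $E$ is supremum-dense in $E^{\delta}$. In part (ii) you also follow the paper, which simply quotes Fremlin's extension theorem (\cite[Theorem 7.17]{ab0}, \cite[Theorem 4]{al}).

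However, your justification of the two auxiliary steps in (ii) rests on a false claim. Order density of $E$ in $E^{u}$ does \emph{not} make $E$ supremum-dense in the sense of Section \ref{sectsc}. That notion requires $x=\bigvee x^{\Downarrow_{E}}$ for \emph{every} $x\in E^{u}$, so every $x$ needs a minorant in $E$. Hence a supremum-dense subspace is minorizing, and therefore majorizing. But a Dedekind complete $E$ is an (order dense) ideal of $E^{u}$, and a majorizing ideal is the whole space. So $E$ is supremum-dense in $E^{u}$ only in the trivial case $E=E^{u}$. Concretely, for $E=\ell_{\infty}$, $E^{u}=\R^{\N}$ and $x=\left(-n\right)_{n}$, one has $x^{\Downarrow_{E}}=\varnothing$. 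Consequently you cannot invoke either the remark on embeddings of supremum- and infimum-dense subsets, or the uniqueness clause of Proposition \ref{line}, whose hypothesis is exactly supremum-density.

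Both conclusions are nevertheless true, and the repair is short.
\begin{itemize}
\item The inclusion $E\to E^{u}$ is $\bigvee$-continuous because an order dense sublattice of an Archimedean vector lattice is regular: suprema existing in $E$ remain suprema in $E^{u}$. So the inclusion is an order continuous homomorphism.
\item For uniqueness, work on the positive cone only. By order density and the Archimedean property, every $x\in E^{u}_{+}$ satisfies $x=\bigvee\left\{e\in E:~0\le e\le x\right\}$. So two $\bigvee$-continuous operators that agree on $E$ agree on $E^{u}_{+}$, hence on $E^{u}=E^{u}_{+}-E^{u}_{+}$ by linearity. Alternatively, take uniqueness directly from Fremlin's theorem, as the paper does.
\end{itemize}
Your sketched direct construction $\widetilde{T}x:=\bigvee T\left(x^{\Downarrow_{E_{+}}}\right)$ on $E^{u}_{+}$ already uses the positive cone in this way, so it is not affected.
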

\begin{proof}
(i) follows from Theorem \ref{supcont1} applied to the case when $F$ is Dedekind complete. (ii) is essentially a partial reformulation of \cite[Theorem 7.17]{ab0} or \cite[Theorem 4]{al}.
\end{proof}

It follows that $\mathbf{UVL}$ is a reflective subcategory of $\mathbf{DAOVS}$, with $E\mapsto E^{\delta u}$ being the reflector. We propose to call $E^{\delta u}$ the \emph{universal completion} of a directed Archimedean ordered vector space $E$. Note that this is consistent with the notion considered in \cite{stennder}.\medskip

We will now investigate if any non-directed Archimedean vector spaces have reflectors in $\mathbf{DVL}$ or $\mathbf{UVL}$.

It is clear that the only ordered vector space of dimension $0$ is $\left\{0\right\}$. It is also easy to see that the only two ordered vector space of dimension $1$ are $\R$ with the standard order and $\R$ with the positive cone containing only $0$ (we denote such a vector space by $\R_{0}$). Note that $\R_{0}$ is Archimedean.

\begin{theorem}\label{main}
For an Archimedean ordered vector space $E$ the following conditions are equivalent:
\item[(i)] Either $E$ is directed, or $E\simeq\R_{0}$;
\item[(ii)] $E$ has a reflector in $\mathbf{DVL}$;
\item[(iii)] $E$ has a reflector in $\mathbf{UVL}$.
\end{theorem}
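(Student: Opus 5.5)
The plan is to prove (i)$\Rightarrow$(ii),(iii) directly and then (ii)$\Rightarrow$(i) and (iii)$\Rightarrow$(i) by contraposition.

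\emph{(i)$\Rightarrow$(ii),(iii).} If $E$ is directed, Proposition \ref{universal} gives the reflector $E^{\delta}$ in $\mathbf{DVL}$. By transitivity of reflectors, $E^{\delta u}$ is then the reflector in $\mathbf{UVL}$. If $E\simeq\R_{0}$, Example \ref{trivial} shows that every linear map out of $\R_{0}$ is $\bigvee$-continuous. Hence morphisms $\R_{0}\to D$ are exactly the linear maps, which correspond to elements of $D$, i.e. to maps from a singleton into $D$. So a reflector of $\R_{0}$ in $\mathbf{DVL}$ (resp. $\mathbf{UVL}$) is the same thing as a free object over a singleton. By Example \ref{cfree} this is $\left(\R^{2},\iota\right)$ with $\iota(1)=(1,-1)$, and $\iota$ is a morphism by Example \ref{trivial}.

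\emph{(ii)$\Rightarrow$(i) and (iii)$\Rightarrow$(i).} Assume $E$ is not directed, so $H:=E_{+}-E_{+}\ne E$, and suppose $E\not\simeq\R_{0}$. We need an obstruction like the one in Proposition \ref{nofree}. Choose a linear complement $E=H\oplus W$ with $W\ne\{0\}$. The key observation, from Example \ref{bigv}, is that $T:E\to D$ is $\bigvee$-continuous if and only if $T|_{H}$ is. Therefore on $W$ the map $T$ may be prescribed completely arbitrarily. We split into two cases.

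\textbf{Case 1: $\dim W\ge2$.} Pick independent $w_{1},w_{2}\in W$ and extend by zero on $H$ and on a complement of $\mathrm{span}\{w_{1},w_{2}\}$ in $W$. For any $D$ and any $d_{1},d_{2}\in D$, there is then a morphism $T$ with $Tw_{i}=d_{i}$. Let $(R,\iota)$ be a putative reflector and take $D=\Co(L)$ with $L,u$ from Proposition \ref{soloway} for $\kappa>\card R$, and $d_{1}=\1_{L}$, $d_{2}=u$. The factorization $\widehat{T}:R\to\Co(L)$ has range containing $\1_{L}$ and $u$. That range is majorizing, so by Proposition \ref{closed} it is $\bigvee$-closed, hence equals $\Co(L)$. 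This contradicts $\card\Co(L)>\card R$. The $\mathbf{UVL}$ version uses $\Co^{\8}(L)$ and Lemma \ref{closedu}, exactly as in Proposition \ref{nofree}.

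\textbf{Case 2: $\dim W=1$, $W=\mathrm{span}\{w\}$, with $H\ne\{0\}$.} Here the main obstacle is to produce two independent ``free'' parameters. For the first, $Tw$ can be arbitrary. For the second, I try to use a nonzero $\bigvee$-continuous map out of $H$. Note that $H$ is directed, and $H\ne\{0\}$ forces $E_{+}\ne\{0\}$, since otherwise $H=\{0\}$ and $E=W\simeq\R_{0}$. I first try a morphism $S:H\to\R^{2}$, or into $\Co(L)$, hitting $\1_{L}$, and then define $Tw:=u$ freely.

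To obtain $S$ with $\1_{L}$ in the range of $\widehat{T}$, I would use $H^{\delta}$. Pick $h\in H_{+}\setminus\{0\}$ and a band projection $P$ of $H^{\delta u}$ onto a component where $h$ acts as a unit. Then send $H$ to $\Co(L)$ by an atom-like evaluation $x\mapsto\lambda(x)\1_{L}$. Such a $\lambda$ needs to be a $\bigvee$-continuous functional on $H$; if none exists, instead use the order-continuous map $H\to H^{\delta u}\to$ (principal band) composed with some embedding into $\Co(L')$, taking a product $\Co(L')\times\Co(L)$. In the product the range contains $(h,0)$-type elements together with $(0,u)$ and $(0,\1_{L})$.

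In fact a cleaner route is to take the target $D:=\Co(L)\times H^{\delta}$. Map $H\to\{0\}\times H^{\delta}$ canonically, and send $w\mapsto(u,0)$. The factorization $\widehat{T}$ then has range containing $(u,0)$ and the range contains $\{0\}\times H$. I expect the delicate point to be getting $(\1_{L},0)$ into the range. To handle it, I would replace $w$'s image by $(u,h)$ with suitable $h$, and use Proposition \ref{closed} restricted to the band $\Co(L)\times\{0\}$ via Lemma \ref{closedu}, aiming to show that the range meets $\Co(L)\times\{0\}$ in a $\bigvee$-closed sublattice containing $\1_{L}$ and $u$. If this works, the cardinality contradiction finishes the proof as in Case 1.
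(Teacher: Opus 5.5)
Your proof of (i)$\Rightarrow$(ii),(iii) is correct and matches the paper. Your Case~1 ($\dim W\ge 2$) is also valid. It is a slightly more direct version of the paper's argument: the paper reduces the case $E_{+}=\{0_{E}\}$ to Proposition~\ref{nofree} via a Hamel basis, whereas you run the proof of Proposition~\ref{nofree} directly on $E$.

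The genuine gap is Case~2 ($\dim W=1$, $H\ne\{0\}$). This is the heart of the theorem, and you give no argument that works there.
\begin{itemize}
\item The functional $\lambda$ need not exist. For $E=L^{\infty}[0,1]\oplus\R_{0}$, the space $H=L^{\infty}[0,1]$ has no nonzero $\bigvee$-continuous functionals.
\item The product target $D=\Co(L)\times H^{\delta}$ fails for a structural reason. Put $Z:=\R u\times H^{\delta}$. This is a Dedekind complete $\bigvee$-closed sublattice of $D$, and $T$ is also $\bigvee$-continuous as a map into $Z$. Uniqueness of the factorization then forces the range of $\widehat{T}$ to lie in $Z$. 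So $(\1_{L},0)$ is provably never in the range.
\item Replacing $Tw$ by $(u,h)$ with $h\in H^{\delta}$ does not help, since $T$ still lands in $Z$.
\end{itemize}
A product of lattices decouples the two sources. It leaves only one free parameter in the $\Co(L)$-coordinate, whereas Proposition~\ref{soloway} needs two, $\1_{L}$ and $u$, in the same place.

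The missing idea is to make the unit coming from $H$ and the free element $u$ coming from $w$ coincide, using a product of spaces rather than of lattices. The paper proceeds as follows.
\begin{itemize}
\item Write $H^{u}=\Co^{\8}(K)$. Use the $\bigvee$-continuous composition operators $R_{K}:\Co(K)\to\Co(K\times L)$ and $R_{L}:\Co(L)\to\Co(K\times L)$ with the coordinate projections. These satisfy $R_{K}\1_{K}=\1_{K\times L}=R_{L}\1_{L}$.
\item Set $T(h+tw):=R_{K}^{u}J_{H}h+tJR_{L}u$. This is $\bigvee$-continuous by Example~\ref{bigv}.
\item Uniqueness in the universal property of $H^{u}$ gives $\widehat{T}\circ(\iota\circ I_{H})^{u}=R_{K}^{u}$. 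Hence $J\1_{K\times L}$ lies in the range of $\widehat{T}$, even though $\1_{K}$ typically lies only in $H^{u}$ and not in $H$. This is why the paper proves (iii)$\Rightarrow$(i) and obtains (ii)$\Rightarrow$(i) through (ii)$\Rightarrow$(iii).
\item Lemma~\ref{closedu} with $f=J\1_{K\times L}$ shows that $(JR_{L})^{-1}\widehat{T}F$ is a $\bigvee$-closed sublattice of $\Co(L)$ containing $\1_{L}$ and $u$. It is therefore all of $\Co(L)$, and injectivity of $JR_{L}$ gives the cardinality contradiction.
\end{itemize}
This handles every $E$ with $E_{+}\ne\{0_{E}\}$ and $\dim W\ge 1$ at once.
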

\begin{proof}
(i)$\Rightarrow$(ii): The case when $E$ is directed is covered by part (i) of Proposition \ref{universal}. Assume that $E\simeq\R_{0}$. Recall that Example \ref{cfree} has established that $\left(\R^{2},\iota\right)$ is a free Dedekind complete vector lattice over $\left\{1\right\}$, where $\iota\left(1\right)=\left(1,-1\right)$. Let $J:\R_{0}\to \R^{2}$ be the linear extension of $\iota$, i.e. $Jt=\left(t,-t\right)$, for every $t\in\R$. Let us show that $\left(\R^{2},J\right)$ is the desired reflector. Assume that $F$ is a Dedekind complete vector lattice, and $T:\R_{0}\to F$ is linear. There is a $\bigvee$-continuous linear map $\widehat{T}:\R^{2}\to F$ such that $\widehat{T}\left(1,-1\right)=T1$. Then, $\widehat{T}Jt=\widehat{T}\left(t,-t\right)=t\widehat{T}\left(1,-1\right)=tT1=Tt$, for every $t\in\R$.\medskip

(ii)$\Rightarrow$(iii) follows from part (ii) of Proposition \ref{universal}, and the fact that a reflector of a reflector is a reflector.\medskip

(iii)$\Rightarrow$(i): Assume that $\left(F,\iota\right)$ is the reflector.

We first consider the case when $E_{+}= \left\{0_{E}\right\}$. Let $A$ be a Hamel basis of $E$, and let $j:A\to E$ be the inclusion map. Let us show that $\left(F,\iota\circ j\right)$ is a free universally complete vector lattice over $A$. Indeed, if $G$ is a universally complete vector lattice, and $\varphi:A\to G$, there is a unique linear operator $T:E\to G$ which extends $\varphi$. By Example \ref{trivial} $T$ is $\bigvee$-continuous, and by assumption there is a unique $\bigvee$-continuous $T^{u}:F\to G$ such that $T=T^{u}\iota$, which implies $\varphi=T\circ j=T^{u}\iota\circ j$. If $S:F\to G$ is $\bigvee$-continuous and such that $\varphi=S\iota\circ j=T\circ j$, then since $A$ is a Hamel basis, it follows that $S\iota=T$, and since $T^{u}$ is uniquely determined by $T$, we conclude that $S=T^{u}$. Now, according to Proposition \ref{nofree} the cardinality of $A$ is at most $1$, and so $\dim E\le 1$. This means that either $E\simeq\R_{0}$ or $\dim E=0$ (in which case it is directed).\medskip

We now assume that $E_{+}\ne \left\{0_{E}\right\}$. The goal is to show that $E$ is directed. Let $H:=E_{+}-E_{+}$, and let $G$ be its algebraic complement. Assume towards contradiction that $\dim G>0$. Let $I_{H}:H\to E$ be the inclusion map, which is $\bigvee$-continuous, by Example \ref{bigv}. There is a compact Hausdorff extremally disconnected space $K$ such that $\Co_{\8}\left(K\right)$ is the universal completion of $H$. Let $J_{H}:H\to \Co_{\8}\left(K\right)$ be the corresponding embedding, which is $\bigvee$-continuous. Since $\dim H>0$, $K$ is nonempty. By the universal property of $H^{u}=\Co_{\8}\left(K\right)$ there is a $\bigvee$-continuous operator $\left(\iota\circ I_{H}\right)^{u}:\Co^{\8}\left(K\right)\to F$ such that $\left(\iota\circ I_{H}\right)^{u}\circ J = \iota\circ I_{H}$. Additionally, let $J_{\Co\left(K\right)}:\Co\left(K\right)\to \Co_{\8}\left(K\right)$ be the inclusion, which is also $\bigvee$-continuous.\medskip

Let $\kappa$ be larger than $\card F$, and let $L$ and $u\in \Co\left(L\right)_{+}$ be given via Proposition \ref{soloway}. Let $R_{K}:\Co\left(K\right)\to\Co\left(K\times L\right)$ be given via $\left[R_{K}f\right]\left(x,y\right):=f\left(x\right)$, $f\in \Co\left(K\right)$, $x\in K$, $y\in L$. Clearly, $R_{K}$ is an injection; moreover, it is $\bigvee$-continuous (this can be easily verified directly, or deduced from \cite[Theorem 4.4]{imhoff} or \cite[Theorem 7.1(iii)]{erz}, since $R_{K}$ is the composition operator with respect to the coordinate projection from $K\times L$ onto $K$; note that a pre-image of a nowhere dense set in $K$ is nowhere dense). An analogously defined operator $R_{L}:\Co\left(L\right)\to\Co\left(K\times L\right)$ is also a $\bigvee$-continuous injection. Note that $R_{K}\1_{K}=\1_{K\times L}=R_{L}\1_{L}$.

Let $J:\Co\left(K\times L\right)\to \Co\left(K\times L\right)^{u}$ be the embedding, which is $\bigvee$-continuous. By the universal property of $\Co\left(K\right)^{u}=\Co_{\8}\left(K\right)$ there is a unique $\bigvee$-continuous operator $R_{K}^{u}:\Co^{\8}\left(K\right)\to \Co\left(K\times L\right)^{u}$ such $R_{K}^{u} J_{\Co\left(K\right)} = JR_{K}$. In particular, the range of $R_{K}^{u}$ contains that of $JR_{K}$.

Let $S:G\to \Co\left(L\right)$ be an arbitrary linear map and such that $u\in SG$. Define $T:E\to \Co\left(K\times L\right)^{u}$ by $$T\left(h+g\right):=R_{K}^{u}J_{H}h+JR_{L}Sg,\mbox{ where }h\in H,~g\in G.$$ Since $R_{K}^{u}J_{H}:H\to \Co\left(K\times L\right)^{u}$ is $\bigvee$-continuous, according to Example \ref{bigv} so is $T$. Therefore, by the assumption there is a $\bigvee$-continuous $\widehat{T}:F\to \Co\left(K\times L\right)^{u}$ such that $T=\widehat{T}\circ\iota$.
$$\begin{tikzcd}
	H &&& {\mathcal{C}^{\infty}(K)} && {\mathcal{C}(K)} \\
	& F &&& {\mathcal{C}(K\times L)^{u}} && {\mathcal{C}(K\times L)} \\
	E &&& G && {\mathcal{C}(L)}
	\arrow["{J_{H}}", from=1-1, to=1-4]
	\arrow["{\iota\circ I_{H}}", from=1-1, to=2-2]
	\arrow["{I_{H}}"', from=1-1, to=3-1]
	\arrow["{\left(\iota\circ I_{H}\right)^{u}}"', color={blue}, from=1-4, to=2-2]
	\arrow["{R_{K}^{u}}", color={blue}, from=1-4, to=2-5]
	\arrow["{J_{\mathcal{C}(K)}}"', from=1-6, to=1-4]
	\arrow["JR_{K}"', from=1-6, to=2-5]
	\arrow["{R_{K}}", from=1-6, to=2-7]
	\arrow["{\widehat{T}}", color={blue}, from=2-2, to=2-5]
	\arrow["J"', from=2-7, to=2-5]
	\arrow["\iota", from=3-1, to=2-2]
	\arrow["T", from=3-1, to=2-5]
	\arrow[hook', from=3-4, to=3-1]
	\arrow["S", from=3-4, to=3-6]
	\arrow["JR_{L}", from=3-6, to=2-5]
	\arrow["{R_{L}}"', from=3-6, to=2-7]
\end{tikzcd}$$

Clearly, $JR_{L}u\in JR_{L}SG \subset TE\subset \widehat{T}F$. On the other hand, $\widehat{T}\left(\iota\circ I_{H}\right)^{u}J_{H}=\widehat{T}\iota \circ I_{H}=TI_{H}=R_{K}^{u}J_{H}$, which implies $\widehat{T}\left(\iota \circ I_{H}\right)^{u}=R_{K}^{u}$ (the blue part of the diagram above commutes). In particular, the range of $\widehat{T}$ contains that of $R_{K}^{u}$, which in turn contains the range of $JR_{K}$. Therefore, $f:=JR_{L}\1_{L}=J\1_{K\times L}=JR_{K}\1_{K}\in \widehat{T}F$.

According to Lemma \ref{closedu} $\widehat{T}F\cap I_{f}$ is a $\bigvee$-closed sublattice of $I_{f}$. Note that $JR_{L}\Co\left(L\right)\subset I_{f}$, and so $\left(JR_{L}\right)^{-1}\widehat{T}F$ is a $\bigvee$-closed sublattice of $\Co\left(L\right)$, which contains $\1_{L},u$. By construction we have $\left(JR_{L}\right)^{-1}\widehat{T}F=\Co\left(L\right)$, so that $JR_{L}\Co\left(L\right)\subset \widehat{T}F$. Since both $J$ and $R_{L}$ are injections, it follows that $\card F<\kappa\le \card\Co\left(L\right)=\card JR_{L}\Co\left(L\right)\le\card \widehat{T}F\le \card F$, contradiction.
\end{proof}


\section{Embeddings between free vector lattices}\label{embed}

Let $\mathbf{VL}$ be the category whose objects are vector lattices and morphisms are vector lattice homomorphisms and let $\mathbf{BL}$ be its subcategory which consists of Banach lattices and contractive vector lattice homomorphisms. Note that these categories are not subcategories of the categories considered in the preceding sections. Unlike in the aforementioned categories, there is a free vector lattice over any set $A$; it will be denoted $\fvl\left(A\right)$, or $\fvl\left(n\right)$ in the case when $A=\left\{1,...,n\right\}$. Note that $\fvl\left(A\right)$ is the vector sublattice of $\R^{\R^{A}}$ generated by the coordinate projections (see \cite[Theorem 2.3]{bleier}). There is also a notion of a free Banach lattice $\fbl\left(A\right)$, although its definition is slightly different from the one given in Section \ref{fre}. For general information about free vector and Banach lattices the reader is recommended to consult with \cite{dw}.\medskip

As mentioned before, $\fvl\left(n\right)$ can be identified with the vector sublattice of $\R^{\R^{n}}$ generated by the coordinate projections, i.e. the space of lattice-linear functions on $\R^{n}$. Every element $f\in\fvl\left(n\right)$ can be represented via $f=\bigvee\limits_{i=1}^{m}g_{i}-\bigvee\limits_{j=1}^{n}h_{j}$, where $g_{i}$'s and $h_{j}$'s are linear functions. Moreover, $f$ agrees with $g_{i_{0}}-h_{j_{0}}$ on the set $$\left\{x\in\R^{n},~\forall i=1,...,m:~ g_{i_{0}}\left(x\right)\ge g_{i}\left(x\right)\right\}\cap \left\{x\in\R^{n},~\forall j=1,...,n:~ h_{j_{0}}\left(x\right)\ge h_{j}\left(x\right)\right\},$$ which is a polyhedral cone (cf. \cite[Corollary 4.3.2]{keimel}).\medskip

Let $S_{n-1}$ be the sphere in $\R^{n}$ with respect to the $\|\cdot\|_{\8}$-norm, i.e. the sphere in $\ell_{n}^{\8}$. Since all elements of $\fvl\left(n\right)$ are positively homogeneous, the restriction operator $f\mapsto \left.f\right|_{S_{n-1}}$ is an isomorphic embedding of $\fvl\left(n\right)$ into $\Co\left(S_{n-1}\right)$. The range of this embedding is a sublattice of $\Co\left(S_{n-1}\right)$. It is easy to see that this sublattice contains $\1$ and separates points of $S_{n-1}$, and is therefore dense in $\Co\left(S_{n-1}\right)$\footnote{This sublattice is in fact the space of all piecewise affine functions on $S_{n-1}$, but we will not need this, and could not find a reference for it.}.\medskip

Recall that any positive operator between order unit spaces is norm-continuous. Hence, any vector lattice homomorphism from $\fvl\left(n\right)$ into $\fvl\left(m\right)$ can be uniquely extended to a homomorphism from $\Co\left(\So_{n-1}\right)$ into $\Co\left(\So_{m-1}\right)$. Recall also, that any norm-dense sublattice of $\Co\left(K\right)$ is order dense, and that a homomorphism is injective once its restriction to an order dense sublattice is injective. Therefore, every injective vector lattice homomorphism from $\fvl\left(n\right)$ into $\fvl\left(m\right)$ can be extended to an injective homomorphism from $\Co\left(\So_{n-1}\right)$ into $\Co\left(\So_{m-1}\right)$. This immediately leads to the following result.

\begin{proposition}
If $\fvl\left(m\right)\simeq \fvl\left(n\right)$, then $m=n$.
\end{proposition}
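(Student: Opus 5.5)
By symmetry it is enough to show that if there is an injective vector lattice homomorphism $\fvl\left(n\right)\to\fvl\left(m\right)$, then $n\le m$. An isomorphism $\fvl\left(m\right)\simeq\fvl\left(n\right)$ gives such embeddings in both directions, so we would get $n\le m$ and $m\le n$.

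So let $T:\fvl\left(n\right)\to\fvl\left(m\right)$ be an injective homomorphism. By the discussion preceding the statement, $T$ extends to an injective homomorphism $\widetilde{T}:\Co\left(\So_{n-1}\right)\to\Co\left(\So_{m-1}\right)$.

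Next I would pass to topology. A vector lattice homomorphism between spaces of continuous functions on compact Hausdorff spaces has the form $\left[\widetilde{T}f\right]\left(y\right)=w\left(y\right)f\left(\varphi\left(y\right)\right)$. Here $w=\widetilde{T}\1\ge0$ is continuous, and $\varphi$ is continuous on the open set $\left\{w>0\right\}$. Injectivity forces $\varphi\left(\left\{w>0\right\}\right)$ to be dense in $\So_{n-1}$. Otherwise Urysohn gives a nonzero $f$ supported off the closure of that image, and then $\widetilde{T}f=0$.

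Now I would use piecewise linearity, which is the key extra information. Take the generators $x_{1},\dots,x_{n}$ (coordinate projections). Their images $T x_{i}$ are lattice-linear functions on $\R^{m}$. On $\R^{m}$ consider $\Phi:=\left(Tx_{1},\dots,Tx_{n}\right):\R^{m}\to\R^{n}$. Since $T$ is a homomorphism, $\left[Tf\right]\left(y\right)=f\left(\Phi\left(y\right)\right)$ for all $f\in\fvl\left(n\right)$: both sides are homomorphisms in $f$ agreeing on generators, and point evaluations are homomorphisms. Injectivity of $T$ means that no nonzero lattice-linear $f$ vanishes on $\Phi\left(\R^{m}\right)$.

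The map $\Phi$ is positively homogeneous and piecewise linear. Indeed, $\R^{m}$ is a finite union of polyhedral cones on each of which every $Tx_{i}$ is linear, by the representation $f=\bigvee g_{i}-\bigvee h_{j}$ recalled above. So $\Phi\left(\R^{m}\right)$ is a finite union of linear images of cones, each lying in a linear subspace of dimension at most $m$.

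If $m<n$, this image is a finite union of cones contained in finitely many proper subspaces $V_{1},\dots,V_{k}$ of $\R^{n}$. For each $V_{l}$ pick a nonzero linear functional $\lambda_{l}$ vanishing on it. Then $f:=\bigwedge_{l}\left|\lambda_{l}\right|$ lies in $\fvl\left(n\right)$ and vanishes on $\Phi\left(\R^{m}\right)$. It is nonzero, since a finite union of proper subspaces cannot cover $\R^{n}$. Hence $Tf=0$ with $f\ne0$, contradicting injectivity. This gives $n\le m$.

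The main obstacle is justifying the formula $Tf=f\circ\Phi$ cleanly. This is the universal property of $\fvl\left(n\right)$: homomorphisms out of it are determined by the images of generators, and evaluation at $y$ composed with $T$ is the homomorphism $f\mapsto f\left(\Phi\left(y\right)\right)$. With this, the extension to $\Co\left(\So_{n-1}\right)$ is not even needed, though it gives the same density picture.
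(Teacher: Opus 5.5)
Your proposal is correct, but it follows a genuinely different route from the paper.

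\textbf{The paper's route.} The paper handles only the isomorphism case here. It extends $T$ and $T^{-1}$ to mutually inverse lattice isomorphisms between $\Co(\So_{m-1})$ and $\Co(\So_{n-1})$. The representation theorem for such isomorphisms then gives a homeomorphism $\So_{m-1}\cong\So_{n-1}$, and $m=n$ follows from topological invariance of dimension.

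\textbf{Your route.} You take an arbitrary injective homomorphism $T:\fvl(n)\to\fvl(m)$ and show that $Tf=f\circ\Phi$ with $\Phi=(Tx_1,\dots,Tx_n)$. The map $\Phi$ is positively homogeneous and linear on each of finitely many polyhedral cones covering $\R^m$. Hence $\Phi(\R^m)$ lies in a finite union of subspaces of dimension at most $m$. When $m<n$, you exhibit the explicit nonzero kernel element $\bigwedge_l|\lambda_l|$.

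\textbf{What each approach buys.} Your argument uses only the representation $f=\bigvee g_i-\bigvee h_j$ and elementary linear algebra. It needs no Banach--Stone/Kaplansky-type representation theorem and no Brouwer-type invariance of dimension for spheres.

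It also proves directly the stronger Proposition \ref{finite}. The paper proves that result by normalizing onto spheres, which turns the piecewise-linear map into the rational map $\psi$ and forces an appeal to semi-algebraic dimension theory. You avoid this because you keep $\Phi$ unnormalized on all of $\R^m$, so each piece stays linear. The paper's Remark after Proposition \ref{finite} explicitly asks for a more elementary argument, and yours is one.

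What the paper's route gives in addition is structural information: isomorphisms of $\fvl(n)$ are induced by homeomorphisms of spheres. The statement does not need this.

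Your paragraph on $\widetilde{T}$, $w$ and $\varphi$ plays no role in the final argument and can be dropped.
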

\begin{proof}
Let $T:\fvl\left(m\right)\to \fvl\left(n\right)$ be an isomorphism, so that $T^{-1}:\fvl\left(n\right)\to \fvl\left(m\right)$ is also an isomorphism. By the discussion above, both of them extend to injective homomorphisms $S:\Co\left(\So_{m-1}\right)\to\Co\left(\So_{n-1}\right)$ and $R:\Co\left(\So_{n-1}\right)\to\Co\left(\So_{m-1}\right)$. Moreover, it is easy to prove by continuity that $R=S^{-1}$, and so both $S,R$ are vector lattice isomorphisms. It follows that $\So_{m-1}$ and $\So_{n-1}$ are homeomorphic (this can be deduced from \cite[Theorem 2.34]{ab}), and so $m=n$.
\end{proof}

We can relatively easily generalize this result to the case of not necessarily finite sets using \cite[Lemma 3.9]{dw}. However, we will go further than that. On the way to the main result of the section we deal with its partial case.

\begin{proposition}\label{finite}
If $\fvl\left(m\right)$ embeds injectively into $\fvl\left(n\right)$, then $m\le n$.
\end{proposition}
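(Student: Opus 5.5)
As in the preceding proposition, the idea is to pass from free vector lattices to spaces of continuous functions on spheres, and then to topology. Let $T:\fvl\left(m\right)\to\fvl\left(n\right)$ be an injective vector lattice homomorphism. By the discussion before the previous proposition, $T$ extends to an injective homomorphism $S:\Co\left(\So_{m-1}\right)\to\Co\left(\So_{n-1}\right)$. The first step is to record the form of $S$. Since $S$ is a homomorphism between $\Co\left(K\right)$-spaces, \cite[Theorem 2.34]{ab} (or the standard description of lattice homomorphisms) gives a function $w\in\Co\left(\So_{n-1}\right)_{+}$ and a map $\varphi$, defined and continuous on the open set $U:=\left\{w>0\right\}$ with values in $\So_{m-1}$, such that $\left[Sf\right]\left(y\right)=w\left(y\right)f\left(\varphi\left(y\right)\right)$ for $y\in U$, and $Sf$ vanishes off $U$.

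The second step uses injectivity. I claim that $\varphi\left(U\right)$ is dense in $\So_{m-1}$. Otherwise, Urysohn's lemma gives a nonzero $f\in\Co\left(\So_{m-1}\right)$ vanishing on $\overline{\varphi\left(U\right)}$. Then $Sf=0$, which contradicts injectivity. Moreover, $\So_{n-1}$ is $\sigma$-compact, so we can write $U=\bigcup_{k}C_{k}$ with $C_{k}$ compact. Each $\varphi\left(C_{k}\right)$ is then compact, and $\varphi\left(U\right)$ is a countable union of these compact sets and is dense in $\So_{m-1}$. By the Baire category theorem, some $\varphi\left(C_{k}\right)$ has nonempty interior in $\So_{m-1}$. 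So $\varphi$ maps a compact subset of an open subset of the $\left(n-1\right)$-manifold $\So_{n-1}$ onto a set containing an open piece of the $\left(m-1\right)$-manifold $\So_{m-1}$, that is, onto a set containing a closed $\left(m-1\right)$-ball.

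The third step, which I expect to be the main obstacle, is to conclude $m\le n$. A continuous surjection alone does not suffice, because of space-filling curves. So continuity is not enough, and the piecewise linear structure must be used. The key fact is that $T$ maps $\fvl\left(m\right)$ into $\fvl\left(n\right)$, so the coordinate functions $\pi_{i}$ on $\So_{m-1}$ are sent to piecewise linear functions $T\pi_{i}=w\cdot\left(\pi_{i}\circ\varphi\right)$. Consequently the map $y\mapsto \left(w\left(y\right)\varphi\left(y\right)\right)=\left(T\pi_{1}\left(y\right),\dots,T\pi_{m}\left(y\right)\right)$ is a positively homogeneous piecewise linear map $\Phi:\R^{n}\to\R^{m}$. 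On each of the finitely many polyhedral cones on which all $T\pi_{i}$ are linear, $\Phi$ is linear, so its image is a cone of dimension at most $n$. Hence $\Phi\left(\R^{n}\right)$ is a finite union of polyhedral cones of dimension at most $n$. On the other hand, the radial projection of $\Phi\left(\left\{w>0\right\}\right)$ onto $\So_{m-1}$ is $\varphi\left(U\right)$, which contains an open subset of $\So_{m-1}$. Therefore $\Phi\left(\R^{n}\right)$ contains a nonempty open cone in $\R^{m}$. A finite union of polyhedral cones of dimension less than $m$ has empty interior, so $m\le n$.

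With this argument the Baire step is not even needed, only the density of $\varphi\left(U\right)$ from the second step: a finite union of lower-dimensional cones is closed and nowhere dense, and a dense subset of $\So_{m-1}$ cannot lie in a closed nowhere dense set. The points to check carefully are that $S\pi_{i}=T\pi_{i}$, so that $S$ on the coordinate functions really is given by the piecewise linear elements of $\fvl\left(n\right)$, and that the representation $w\cdot\left(f\circ\varphi\right)$ is compatible with positive homogeneity.
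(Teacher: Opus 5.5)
Your proof is correct. Its last step takes a genuinely different and more elementary route than the paper, although one intermediate claim (the Baire step) is false and should be deleted.

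\textbf{Shared part.} The opening is the same as the paper's. You extend $T$ to an injective homomorphism $S:\Co(\So_{m-1})\to\Co(\So_{n-1})$ and write $Sf=w\cdot(f\circ\varphi)$. Injectivity then makes $\varphi(U)$ dense in $\So_{m-1}$, and the functions $T\pi_i=w\cdot(\pi_i\circ\varphi)$ lie in $\fvl(n)$.

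\textbf{Where the routes differ.} The paper works with the homogeneous extension of $\varphi$ itself, namely $\hat\varphi(x)=\frac{\|x\|_\infty}{\hat w(x)}(\hat g_1(x),\dots,\hat g_m(x))$. Because of the division by $\hat w$ and the factor $\|x\|_\infty$, it must cut $V$ into pieces where $\hat w=\pm\hat g_i$ and $\|x\|_\infty=\pm x_j$. Even on a piece the map is rational rather than linear, so the paper proves nowhere density of the image with semi-algebraic dimension theory.

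You never divide. For $y\in U$ you have $\varphi(y)=\Phi(y/w(y))$, where $\Phi=(\hat g_1,\dots,\hat g_m)$ is positively homogeneous and piecewise linear. Hence $\varphi(U)\subset\Phi(\R^n)\subset\bigcup_j L_j(\R^n)$ for finitely many linear maps $L_j:\R^n\to\R^m$. When $m>n$ this union is a closed cone with empty interior, so its trace on $\So_{m-1}$ is closed and nowhere dense in $\So_{m-1}$. That contradicts the density of $\varphi(U)$.

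This replaces the semi-algebraic machinery by linear algebra and elementary topology. It is exactly the ``more elementary way to complete the argument'' that the paper's subsequent remark asks for. The only thing the paper's route adds is that it tracks $\varphi$ itself, and that normalization is precisely what makes its pieces nonlinear.

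\textbf{The Baire step is invalid.} Density of $\varphi(U)=\bigcup_k\varphi(C_k)$ does not force some compact $\varphi(C_k)$ to have nonempty interior. For example, $\mathbb{Q}\cap[0,1]$ is a dense countable union of points. Baire's theorem applies only when the union is the whole space. Here it happens to be: $\Phi(y)\neq 0$ exactly for $y\in V$, so $\varphi(U)=\Phi(\R^n)\cap\So_{m-1}$ is closed, hence equal to $\So_{m-1}$. But that fact already uses the piecewise linear step.

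Your final paragraph shows that density alone suffices. So drop the Baire argument and the ``open cone'' sentence in your third paragraph. Then make the conclusion precise. One option is to say that the cone $\Phi(V)=(0,\infty)\varphi(U)$ is dense in $\R^m$ but is contained in the closed set $\bigcup_j L_j(\R^n)$, which has empty interior. This is better than saying a dense subset of $\So_{m-1}$ ``cannot lie in a closed nowhere dense set'', which needs the nowhere density to be relative to the sphere.
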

\begin{proof}
Let $T:\fvl\left(m\right)\to \fvl\left(n\right)$ be an injective homomorphism and let $S:\Co\left(\So_{m-1}\right)\to\Co\left(\So_{n-1}\right)$ be its extension, as before. According to \cite[Theorem 2.34]{ab} there are $w:\So_{n-1}\to\R_{+}$ and $\varphi:\So_{n-1}\to \So_{m-1}$ such that $\left[Sf\right]\left(x\right)=w\left(x\right)f\left(\varphi\left(x\right)\right)$, for every $x\in \So_{n-1}$ and $f\in \Co\left(\So_{m-1}\right)$. Moreover, injectivity of $S$ implies that $\varphi\left(U\right)$ is dense in $\So_{m-1}$, where $U:=\left\{x\in\So_{n-1},~ w\left(x\right)> 0\right\}$. Indeed, otherwise find non-zero $f\in\Co\left(\So_{m-1}\right)$ which vanishes outside of $\varphi\left(U\right)$; then $Sf=\0$, contradicting injectivity. Also, note that $w=S\1=T\1\in \fvl\left(n\right)$.

Fix $i=1,...,m$. Let $p_{i}:\R^{m}\to \R$ be the projection onto the $i$-th coordinate. Note that $\left.p_{i}\right|_{\So_{m-1}}\in\fvl\left(m\right)$. It follows that $g_{i}:=w\cdot p_{i}\circ\varphi=S\left.p_{i}\right|_{\So_{m-1}}=T\left.p_{i}\right|_{\So_{m-1}}\in \fvl\left(n\right)$. Hence, for every $x\in U$ we have that $\varphi\left(x\right)=\frac{1}{w\left(x\right)}\left(g_{1}\left(x\right),...,g_{m}\left(x\right)\right)$. Since $\varphi\left(x\right)\in\So_{m-1}$, it follows that $\bigvee\limits_{i=1,...,m}\left|g_{i}\left(x\right)\right|=w\left(x\right)$, for every $x\in U$.

Next, we can extend $w,g_{1},...,g_{m}$ by positive homogeneity to get piecewise linear functions on $\hat{w},\hat{g}_{1},...,\hat{g}_{m}:\R^{n}\to\R$ with pieces being convex polyhedral cones in $\R^{n}$, as discussed at the beginning of the section. Moreover, we have $\bigvee\limits_{i=1,...,m}\left|\hat{g}_{i}\left(x\right)\right|=\hat{w}\left(x\right)$, for every $x\in V$, where $V=\R_{+}U=\left\{x\in\R^{n},~ \hat{w}\left(x\right)> 0\right\}$. Furthermore, density of $\varphi\left(U\right)$ in $\So_{m-1}$ now becomes density of $\hat{\varphi}\left(V\right)$ in $\R^{m}$, where $\hat{\varphi}$ is the extension of $\varphi$ by positive homogeneity, so that $\hat{\varphi}\left(x\right)=\frac{\|x\|_{\8}}{\hat{w}\left(x\right)}\left(\hat{g}_{1}\left(x\right),...,\hat{g}_{m}\left(x\right)\right)$, for every $x\in V$.

We can now split $V$ into a finite number of convex polyhedral cones in $\R^{n}$ such that on each of these cones $W$ each of $\hat{w},\hat{g}_{1},...,\hat{g}_{m}$ agrees with a restriction of a linear function. Moreover, we can further split $W=\bigcup\limits_{i=1}^{m}W_{i}^{\pm}$ where $W_{i}^{+}:=\left\{x\in W,~ \hat{w}\left(x\right)=\hat{g}_{i}\left(x\right)\right\}$ and $W_{i}^{-}:=\left\{x\in W,~ \hat{w}\left(x\right)=-\hat{g}_{i}\left(x\right)\right\}$ and $W=\bigcup\limits_{j=1}^{n} {}_{j}^{\pm}W$, where ${}_{j}^{+}W:=\left\{x\in W,~ \|x\|_{\8}=x_{j}\right\}$ and ${}_{j}^{-}W:=\left\{x\in W,~ \|x\|_{\8}=-x_{j}\right\}$.

Now assume that $m>n$. To reach a contradiction it is enough to show that $\hat{\varphi}\left(\Int \left(W_{i}^{\pm}\cap {}_{j}^{\pm}W\right)\right)$ in nowhere dense in $\R^{m}$, for every $i=1,...,m$, $j=1,...,n$, and every choice of $\pm$. This leaves us with the following configuration:

\begin{itemize}
\item $Q$ is an open polyhedral cone in $\R^{n}$;
\item $f_{1},...,f_{m}$ -- linear functions on $\R^{n}$;
\item $0\ne f_{1}\left(x\right)\ge \pm f_{i}\left(x\right)$ and $0\ne x_{1}\ge \pm x_{j}$, for every $i=1,...,m$, $j=1,...,n$ and $x\in Q$.
\end{itemize}
The goal is to show that $\psi\left(Q\right)$ in nowhere dense in $\R^{m}$, where $\psi\left(x\right)=\frac{x_{1}}{f_{1}\left(x\right)}\left(f_{1}\left(x\right),...,f_{m}\left(x\right)\right)$, for every $x\in Q$.

Note that $Q$ is a semi-algebraic set (see \cite[Definition 2.1.1]{br}) and $\psi$ is rational, hence a semi-algebraic map (see \cite[Definition 2.3.2]{br} and \cite[Exercise 2.3.9(2,3,5)]{br}). Therefore, $\psi\left(Q\right)$ is also semi-algebraic (see \cite[Theorem 2.3.4]{br}), moreover $\dim \overline{\psi\left(Q\right)}=\dim \psi\left(Q\right)\le \dim Q=n$ (see \cite[Exercise 2.5.4(1,2)]{br} and \cite[Definitions 2.4.2 and 2.5.3]{br} for the concept of the dimension). Thus, $\overline{\psi\left(Q\right)}$ is a finite union of sets homeomorphic to $\R^{k}$, for $k\le n<m$ (see \cite[Section 2.4.5, comment (2)]{br}), and so cannot contain an open set in $\R^{m}$.
\end{proof}

\begin{remark}
It is desirable to find a more elementary way to complete the argument. Note that in the proof we obtained some information which we ended up not using, but perhaps it could be useful for an alternative approach. Furthermore, Proposition \ref{finite} is algebraic in nature, and so one could seek an entirely algebraic proof for it.
\qed\end{remark}

\begin{theorem}\label{mfre}
Let $A$ and $B$ be sets. If $\fvl\left(A\right)$ embeds injectively into $\fvl\left(B\right)$ if and only if $\card A\le \card B$.
\end{theorem}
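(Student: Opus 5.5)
The ``if'' direction is the easy one. If $\card A\le\card B$, fix an injection $\sigma:A\to B$. By the universal property of $\fvl\left(A\right)$ it induces a homomorphism $T:\fvl\left(A\right)\to\fvl\left(B\right)$ sending generators to generators. In the realization of $\fvl\left(A\right)$ as a sublattice of $\R^{\R^{A}}$, $T$ is composition with the restriction map $\R^{B}\to\R^{A}$, $x\mapsto x\circ\sigma$. This map is surjective, so $T$ is injective.

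For the ``only if'' direction, suppose $T:\fvl\left(A\right)\to\fvl\left(B\right)$ is an injective homomorphism. First take $B$ infinite and $A$ arbitrary. Every element of $\fvl\left(A\right)$ lies in the sublattice generated by finitely many generators, so $\card\fvl\left(A\right)=\max\left(\card A,\mathfrak c\right)$ when $A\ne\varnothing$, and likewise for $B$. This does not yet give $\card A\le\card B$ when both are at most $\mathfrak c$, so I will instead use a support argument. Each $Ta$, for $a\in A$, depends on a finite set $F_{a}\subset B$ of coordinates. If $\card A>\card B$, then since $B$ is infinite there are only $\card B$ finite subsets of $B$. By pigeonhole, uncountably (indeed infinitely) many $a$ share the same finite support $F$. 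Choosing $m>\card F=:n$ such elements $a_{1},\dots,a_{m}$, the restriction of $T$ to the copy of $\fvl\left(m\right)$ generated by $a_{1},\dots,a_{m}$ lands in the sublattice of $\fvl\left(B\right)$ generated by $F$. That sublattice is a copy of $\fvl\left(n\right)$, because the inclusion $F\subset B$ induces an injective homomorphism as in the first paragraph, and $\fvl\left(m\right)$ embeds in $\fvl\left(A\right)$ the same way. So $\fvl\left(m\right)$ embeds injectively into $\fvl\left(n\right)$ with $m>n$, contradicting Proposition \ref{finite}.

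Now take $B$ finite, $\card B=n$. If $A$ is finite, Proposition \ref{finite} applies directly. If $A$ is infinite, choose any $n+1$ elements of $A$; the copy of $\fvl\left(n+1\right)$ they generate embeds into $\fvl\left(n\right)$, again contradicting Proposition \ref{finite}.

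The step that needs care is justifying that each element of $\fvl\left(B\right)$ lies in the sublattice generated by finitely many generators, and that this sublattice is isomorphic to $\fvl\left(F\right)$. The first holds because the set of elements with finite support is a sublattice containing the generators. The second is the same surjective-restriction argument as in the first paragraph. Everything else reduces to the finite case handled by Proposition \ref{finite}.
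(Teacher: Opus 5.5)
Your proof is correct and follows essentially the same route as the paper. A pigeonhole argument over the finite subsets of $B$ yields $n+1$ generators of $\fvl(A)$ whose images lie in $\vlt(\iota_B(F))\simeq\fvl(n)$, and the resulting injective homomorphism $\fvl(n+1)\to\fvl(n)$ contradicts Proposition \ref{finite}. The differences are minor: you prove sufficiency and the isomorphism $\vlt(\iota_B(F))\simeq\fvl(F)$ directly, via surjectivity of the coordinate restriction maps (the paper cites the literature for both), and you handle finite $B$ separately instead of folding it into the pigeonhole step.
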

\begin{proof}
Sufficiency has been addressed in \cite[Proposition 3.5(1)]{dw}, and so we only prove necessity. The case when both $A,B$ are finite is considered in Proposition \ref{finite}, and so we may assume that $A$ is infinite and that $\card A> \card B$. Let $J:\fvl\left(A\right)\to\fvl\left(B\right)$ be an injective homomorphism. Also, let $\iota_{A}:A\to \fvl\left(A\right)$ and $\iota_{B}:B\to \fvl\left(B\right)$ be as in the definition of a free object in Section \ref{fre}. Recall that $\fvl\left(B\right)=\vlt\left(\iota_{B}\left(B\right)\right)$ (see \cite[Proposition 3.2]{dw}). Hence, for every $a\in A$ there is a finite $B_{a}\subset B$ such that $J\iota_{A}\left(a\right)\in\vlt\left(\iota_{B}\left(B_{a}\right)\right)$.

Since the cardinality of the collection of the finite subsets of $B$ is smaller than $\card A$, there is a finite $B'\subset B$ and infinite $A'\subset A$ such that $B_{a}=B'$, for every $a\in A'$. Let $n:=\card B'$ and let $A''$ be a finite subset of $A'$ of cardinality $n+1$. We have $J\iota\left(A''\right)\subset\vlt\left(\iota_{B}\left(B'\right)\right)$, and since $J$ is a homomorphism, we get $J\vlt\left(\iota\left(A''\right)\right)\subset\vlt\left(\iota_{B}\left(B'\right)\right)$. Now, recall that $\vlt\left(\iota\left(A''\right)\right)\simeq \fvl\left(n+1\right)$ and $\vlt\left(\iota_{B}\left(B'\right)\right)\simeq \fvl\left(n\right)$ (see \cite[Proposition 3.5]{dw}). Thus, the restriction of $J$ is an injective homomorphism from a vector lattice isomorphic to $\fvl\left(n+1\right)$ into a vector lattice isomorphic to $\fvl\left(n\right)$, contradicting Proposition \ref{finite}.
\end{proof}

Note that the corresponding result does not hold for free Banach lattices.

\begin{proposition}
$\fbl\left(\omega\right)$ embeds injectively into $\fbl\left(2\right)$.
\end{proposition}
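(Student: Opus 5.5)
We will use the universal property of free Banach lattices together with a space-filling curve. Recall (see \cite{dw}) that $\fbl\left(A\right)$ is represented as a lattice of positively homogeneous functions on the dual ball $B_{\ell_{\8}\left(A\right)}$ of $\ell_{1}\left(A\right)$. Here the generator $\delta_{a}$ is the $a$-th coordinate evaluation, and every $F\in\fbl\left(A\right)$ is weak$^{*}$-continuous on $B_{\ell_{\8}\left(A\right)}$; for $A=\omega$ this is the product topology of $\left[-1,1\right]^{\omega}$. Moreover, for each $x^{*}\in B_{\ell_{\8}\left(A\right)}$ the evaluation $F\mapsto F\left(x^{*}\right)$ is the unique lattice homomorphism $\fbl\left(A\right)\to\R$ sending $\delta_{a}$ to $x^{*}_{a}$. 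For $A=\left\{1,2\right\}$ we also use the known fact that $\fbl\left(2\right)$ is lattice isomorphic to $\Co\left(S_{1}\right)$ via restriction. Hence every continuous function on the circle extends positively homogeneously to an element of $\fbl\left(2\right)$, and the $\fbl$-norm is equivalent to the sup-norm on $S_{1}$.

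\emph{Step 1: the curve.} By the Hahn--Mazurkiewicz theorem the Hilbert cube $\left[-1,1\right]^{\omega}$ is a continuous image of an arc. Choose a closed arc $\Gamma\subset S_{1}$ and a continuous surjection $\gamma=\left(\gamma_{k}\right)_{k}:\Gamma\to\left[-1,1\right]^{\omega}$. Extend each coordinate $\gamma_{k}$ to a continuous function on $S_{1}$ with values in $\left[-1,1\right]$ by Tietze's theorem. Then extend positively homogeneously to $\R^{2}$, obtaining $g_{k}\in\fbl\left(2\right)$ with $\|g_{k}\|\le C$, where $C$ depends only on the norm equivalence. Put $f_{k}:=C^{-1}g_{k}$, so that $\|f_{k}\|\le1$.

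\emph{Step 2: the homomorphism.} By the universal property of $\fbl\left(\omega\right)$ there is a lattice homomorphism $T:\fbl\left(\omega\right)\to\fbl\left(2\right)$ with $T\delta_{k}=f_{k}$, and $\|T\|\le1$. Fix $x\in\R^{2}$. The map $F\mapsto\left[TF\right]\left(x\right)$ is a lattice homomorphism sending $\delta_{k}$ to $f_{k}\left(x\right)$. By the uniqueness of evaluations recalled above, it follows that
$$\left[TF\right]\left(x\right)=F\left(\left(f_{k}\left(x\right)\right)_{k}\right).$$
The identification is legitimate whenever $\left(f_{k}\left(x\right)\right)_{k}$ lies in the dual ball, e.g. for $x$ in a small enough multiple of $S_{1}$; for other $x$ we use positive homogeneity.

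\emph{Step 3: injectivity.} Suppose $TF=0$. By Step 2 and positive homogeneity, $F$ vanishes on $\left\{\left(f_{k}\left(x\right)\right)_{k}:~x\in\Gamma\right\}$. This set equals $C^{-1}\gamma\left(\Gamma\right)=C^{-1}\left[-1,1\right]^{\omega}$. Since $F$ is positively homogeneous, it vanishes on $\R_{+}C^{-1}\left[-1,1\right]^{\omega}$, which contains all of $B_{\ell_{\8}\left(\omega\right)}$. Hence $F=0$, and $T$ is injective.

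The main obstacle is the justification of the function representation in Step 2. Specifically, one must make sure that elements of $\fbl\left(\omega\right)$ are determined by their values on $B_{\ell_{\8}\left(\omega\right)}$, and that the identification of $x\mapsto\left[TF\right]\left(x\right)$ with $F$ composed with the curve is rigorous. I would handle the latter by checking it on the free vector lattice generated by the $\delta_{k}$, which is norm dense in $\fbl\left(\omega\right)$, where it is just the homomorphism property. I would then pass to the limit using norm continuity of $T$ and of point evaluations.
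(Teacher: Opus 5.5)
Your proof is correct and is essentially the paper's argument: a continuous surjection of the circle onto the Hilbert cube $[-1,1]^{\omega}$ (Hahn--Mazurkiewicz) induces, by composition, an injective lattice homomorphism from $\fbl(\omega)\subset\Co([-1,1]^{\omega})$ into $\Co(\So_1)\simeq\fbl(2)$. The paper restricts the composition operator $f\mapsto f\circ\varphi$ to $\fbl(\omega)$ directly, so your route through the universal property, the rescaling by $C^{-1}$, and the identification of $T$ with that composition operator are unnecessary, although you carry them out correctly.
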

\begin{proof}
Recall that $\fbl\left(\omega\right)$ is a sublattice of $\Co\left(K\right)$, where $K:\left[-1,1\right]^{\omega}$ (see \cite[Corollary 5.7]{dw}), while $\fbl\left(2\right)\simeq\Co\left(\So_{1}\right)$ (this follows from \cite[Proposition 5.3]{dw}). Since $K$ is a metrizable connected locally connected compact space there is a surjective continuous map from $\left[-1,1\right]$ onto $K$ (see \cite[Exercise 6.3.14]{engelking}), and there is also a surjective continuous map from $\So_{1}$ onto $\left[-1,1\right]$. Hence, there is a surjective continuous map $\varphi:\So_{1}\to K$, and then $f\mapsto f\circ\varphi$ defines an injective homomorphism from $\Co\left(K\right)$ into $\Co\left(\So_{1}\right)$. Its restriction to $\fbl\left(\omega\right)$ is the required embedding.
\end{proof}

\begin{remark}
Similar arguments show that $\fbl\left(m\right)$ embeds as a closed sublattice of $\fbl\left(n\right)$, for any natural $m,n>1$. However, $\fbl\left(A\right)$ does not embed as a closed sublattice of $\fbl\left(n\right)$, for any infinite $A$ and $n\in\N$, since $\fbl\left(n\right)\simeq \Co\left(\So_{n-1}\right)$, and so its closed sublattices are isomorphic to AM-spaces, which is impossible for $\fbl\left(A\right)$ (see \cite[Theorem 8.2]{dw}).
\qed\end{remark}

We say that $e_{1},...,e_{n}$ in a vector lattice $E$ are \emph{vector lattice independent} if no non-trivial lattice linear expression with $e_{1},...,e_{n}$ results in $0_{E}$. Recall that if $h$ is a lattice linear expression viewed as a function on $\R^{n}$ (and so an element of $\fvl\left(n\right)$), then $h\left(e_{1},...,e_{n}\right)=\widehat{\varphi}\left(h\right)$, where $\widehat{\varphi}:\fvl\left(n\right)\to E$ is generated by $\varphi: i\mapsto e_{i}$, $i=1,...,n$. Hence, $e_{1},...,e_{n}$ are vector lattice independent if and only if $\widehat{\varphi}$ is injective. It follows from Proposition \ref{finite} that there is no $n+1$ vector lattice independent elements in $\fvl\left(n\right)$. This idea can be generalized to infinite sets.\medskip

\begin{remark}\label{independent}
It is easy to see that if $e_{1},...,e_{n}$ are vector lattice independent, then removing any element results in a strictly smaller generated vector sublattice of $E$. The converse is false: in $\R$ the set $\left\{1\right\}$ is obviously a minimal generating set, but it is not vector lattice independent, since $1\vee 0 -1 =0$.
\qed\end{remark}

\section*{Acknowledgements}

The authors would like to thank Kevin Abela, Emmanuel Chetcuti, Anke Kalauch, Mitchell Taylor, Pedro Tradacete, Vladimir Troitsky, Onno van Gaans and Marten Wortel for helpful conversations on the topic of the paper. Additional credit goes to Janko Stennder and David Mu\~noz-Lahoz for informing the authors about \cite{kvgs} and \cite{keimel}, respectively. Part of the work on the article took place during the Workshop on Free Banach Lattices and the subsequent visit of the second author to the University of Wuppertal. The authors are grateful to Jochen Gl\"uck for organizing the conference, hospitality and some ideas related to the discussion preceding Remark \ref{independent}. The material of Section \ref{embed} was largely inspired by the discussion in the first day of the aforementioned conference sparked by some questions posed by Noa Bihlmaier.

The first author was supported by ERDF and MICIU/AEI/ 10.13039/501100011033 project PID2021-122126NB-C32 and by Fundaci\'{o}n S\'{e}neca - ACyT Regi\'{o}n de Murcia project 21955/PI/22

The second author was supported by  grant PCI2024-155094-2 funded by\linebreak MICIU/AEI/10.13039/50110001103

\end{document}